\documentclass[submission,copyright,creativecommons]{eptcs}
\providecommand{\event}{AiML 2026} 

\usepackage{aiml26}

\usepackage{iftex}
\usepackage{multicol}
\usepackage{amsmath,amssymb,amsthm}
\usepackage{enumitem}
\usepackage{comment}
\usepackage{booktabs}

\ifpdf
  \usepackage{underscore}        
  \usepackage[T1]{fontenc}        
\else
  \usepackage{breakurl}           
\fi

\newcommand{\cl}{\mathrm{cl}}         
\newcommand{\Int}{\mathrm{int}}       
\newcommand{\st}{\mathrm{bcn}}         
\newcommand{\sem}[1]{[\![#1]\!]}      
\newcommand{\diam}{\Diamond}          
\newcommand{\sq}{\Box}                
\newcommand{\Nbd}{\mathcal{N}}         
\newcommand{\str}{{{}^{*}\!}}        
\newcommand{\lozengens}{\diam_\mathrm{ns}}
\newcommand{\lozengest}{\lozenge_\mathrm{st}}
\usepackage{xcolor}
\usepackage{apxproof}

\title{Halo Semantics for Modal Logic}
\author{Yo\`av Montacute
\institute{National Institute of Informatics \\ Tokyo, Japan}
\email{montacute@nii.ac.jp}}

\newcommand{\titlerunning}{Halo Semantics for Modal Logic}
\newcommand{\authorrunning}{Yo\`av Montacute}

\hypersetup{
  bookmarksnumbered,
  pdftitle    = {\titlerunning},
  pdfauthor   = {\authorrunning},
  pdfsubject  = {EPTCS},               
  pdfkeywords = {Nonstandard analysis, Halo, Topological semantics, Modal logic} 
}

\begin{document}

\maketitle

\begin{abstract}
    In nonstandard analysis the halo of a point in a topological space 
is the intersection of the nonstandard extensions of all its open neighbourhoods.
We define a parametric family of modal operators from the halo by varying which elements of the nonstandard extension are admitted as witnesses, and identify four canonical instances.
Two recover well-known modalities: the topological closure and the Cantor derivative.
A third reduces to Kripke semantics over the specialisation preorder.

The fourth, purely nonstandard instance admits only nonstandard witnesses.
The Transfer Principle forces it to coincide with the $\omega$-accumulation point operator, a classical topological notion not previously studied in modal logic.
Unlike the Cantor derivative, the $\omega$-accumulation operator maps arbitrary sets to closed sets without any separation axiom, yielding an $\omega$-Cantor--Bendixson decomposition on all topological spaces.
Axiom~4 holds universally, again without separation conditions.
We prove that $\mathsf{K4}$ is the complete logic over infinite spaces, and $\mathsf{GL}$ over infinite $\omega$-scattered spaces.
\end{abstract}

\section{Introduction}

In Robinson's nonstandard analysis, every point $x$ in a topological space acquires an infinitesimal neighbourhood: the intersection of the nonstandard extensions of all its open sets, called the \emph{halo}.
It replaces the infinitary quantification over all open neighbourhoods that defines topological operations with a single membership test against one set.
The halo encodes the local topology of the space in a single object.
This invites the following inquiry: 
If a formula $\varphi$ is true at various points of the space, then what is the relationship between the halo of a given point and the nonstandard extension $\str\sem{\varphi}$ of the truth set of such a formula?

We investigate this by defining a family of modal operators which specify for each point $x$, a set $E(x)$ of witnesses which are excluded from the nonstandard truth set $\str\sem{\varphi}$.
The resulting family has four canonical members, on which we focus.
The purely standard case collapses to Kripke semantics over the specialisation preorder.
Two more turn out to coincide with well-known modalities: when nothing is excluded the operator equates to topological closure, and when the point itself is excluded it equates to the Cantor derivative.
Hence, the classical semantics of McKinsey-Tarski~\cite{McKinseyTarski1944} and 
Esakia~\cite{Esakia1981} arise naturally from our framework. 

The truly novel operator is the one obtained by excluding all standard points, so that witnesses must be nonstandard hyperpoints.
The \emph{Transfer Principle} of nonstandard analysis forces this condition to be equivalent to a well-known but modally unstudied topological notion: $\omega$-accumulation.
Unlike the Cantor derivative, the $\omega$-accumulation operator maps arbitrary sets to closed sets without any separation axiom.
This yields a Cantor--Bendixson-style decomposition on all topological spaces, paralleling the classical theorem.
In addition, Axiom~4 holds on all topological spaces, again with no separation condition required.
We show that the logic $\mathsf{K4}$ is complete for the nonstandard semantics over all infinite topological spaces; and that $\mathsf{GL}$ is complete over all infinite $\omega$-scattered spaces.

\paragraph{Outline.}
Section~\ref{sec:ns} recalls the relevant definitions from nonstandard analysis.
Section~\ref{sec:param} defines the parametric family and analyses all four exclusion choices with examples.
Section~\ref{subsec:st} studies the purely standard operator.
Section~\ref{sec:closure} discusses the closure and Cantor derivative operators.
Section~\ref{sec:ns-op} develops the theory of the nonstandard modality, including the $\omega$-accumulation characterisation, its coincidence with the Cantor derivative on $T_1$ spaces, the $\omega$-Cantor-Bendixson decomposition, and the completeness of $\mathsf{K4}$ and $\mathsf{GL}$.
We conclude with some remarks.

\section{Nonstandard Framework}
\label{sec:ns}

We recall the axiomatic framework for nonstandard analysis following Salbany and Todorov \cite{SalbanyTodorov1999}, who work within the superstructure of an infinite set. 
This framework specifies the conditions required for halos to have the appropriate topological properties.

\subsection{Superstructures and the star map}

Let $S$ be an infinite set. In our applications $S = X \cup \mathbb{R}$, where $(X,\tau)$ is a topological space.
The \emph{superstructure} over $S$ is the hierarchy
\[
  V(S) = \bigcup_{k \in \mathbb{N}_0} V_k(S),
\]
built inductively as
\[
  V_0(S) = S, \qquad V_{k+1}(S) = V_k(S) \cup \mathcal{P}(V_k(S)).
\]
Each level adds all subsets of the previous level. The idea is that $V_0(S)$ contains the base objects, $V_1(S)$ adds all sets of base objects, $V_2(S)$ adds all sets of those, and so on. Concretely:

\begin{itemize}
  \item $V_0(S) = S$: the points of $X$ and the real numbers.
  \item $V_1(S) = S \cup \mathcal{P}(S)$: includes all subsets of $S$,
    in particular every open set $U \subseteq X$, since $U \subseteq X
    \subseteq S$ and hence $U \in \mathcal{P}(S) \subseteq V_1(S)$.
  \item $V_2(S)$: includes all subsets of $V_1(S)$, in particular the topology $\tau$, since $\tau$ is a collection of open sets and each open set is in $V_1(S)$, so $\tau \subseteq V_1(S)$ and hence $\tau \in \mathcal{P}(V_1(S)) \subseteq V_2(S)$.
\end{itemize}

Functions are identified with their graphs (sets of ordered pairs), and ordered pairs are encoded as sets in the usual way ($\langle a,b\rangle = \{\{a\},\{a,b\}\}$), so functions also appear at a finite level of $V(S)$.
Every mathematical object used in this paper, such as points, open sets, the topology, valuations, and the neighbourhood filter, is an element of $V(S)$ at some finite level.

The formal language $\mathcal{L}(V(S))$ consists of \emph{bounded quantifier formulas}: formulas built from $=$, $\in$, the logical connectives, and quantifiers restricted to the form $\forall x \in A$ or $\exists x \in A$ for some $A \in V(S)$.
Restricting to bounded quantifiers ensures that the formula does not quantify over the entire (proper-class-sized) universe, only over specific sets already in $V(S)$. 
This language is expressive enough to state all standard topological properties of $(X,\tau)$
(see \cite{SalbanyTodorov1999}).

\begin{definition}[nonstandard model]
\label{def:nsmodel}
A \emph{nonstandard model} of $V(S)$ is a superstructure $V(\str S)$ together with a \emph{star map} $\str : V(S) \to V(\str S)$ satisfying the axioms $\mathrm{NSA1}$, $\mathrm{NSA2}$ and $\mathrm{NSA3}$ below.

A set $B \in V(\str S)$ is called \emph{internal} if $B \in \str C$ for some $C \in V(S)$, i.e.\ if $B$ is an element of the nonstandard extension of some standard set. A set that is not internal is called \emph{external}.

\begin{enumerate}[label=\textup{(NSA\arabic*)}]
  \item \textbf{Extension Principle.}
    $\str s = s$ for all $s \in S$; equivalently, $S \subseteq \str S$.

  \item \textbf{Transfer Principle.}
    A bounded quantifier formula $\Phi(A_1,\ldots,A_n)$ is true in $V(S)$ if and only if $\Phi(\str A_1,\ldots,\str A_n)$ is true in $V(\str S)$.

  \item \textbf{Saturation Principle.}
    Every family of internal sets of cardinality at most $\kappa\geq |V(S)|$ with the finite intersection property has non-empty intersection.
\end{enumerate}
We call a model satisfying the axioms above 
\emph{polysaturated}.
Such models exist for any infinite $S$, as shown by the ``ultrapower construction''~\cite{SalbanyTodorov1999}. 
We fix one from this point onward.
\end{definition}

The distinction between internal and external sets is fundamental to nonstandard analysis.
Any set explicitly constructed by Transfer from a standard formula, e.g.\ $\str(U \cap A)$, $\str U \setminus \str A$, and so on is internal.
The Transfer Principle applies only to internal sets as bounded quantifier formulas can be transferred between $V(S)$ and $V(\str S)$ only when all sets quantified over are internal. 
Similarly, the Saturation Principle produces intersections only for families of internal sets. 
Important examples of external sets used in this paper are the standard universe $X$ inside $\str X$, the halo $\mu(x)$ and the set $\str S\setminus X$ for infinite $S$.

For us this distinction is important when applying Saturation to a family such as $\{\str (U \cap A) \setminus G : U \in \mathcal{N}(x),\, \text{finite } G \subseteq X\}$, where $\Nbd(x)=\{U \in \tau:x\in U\}$ is the open neighbourhood filter of $x$, because each individual member $\str (U \cap A) \setminus G$ is internal;
the intersection of this family, however, need not be internal, and indeed, it is a subset of $\str A \setminus X$, which is external.

The Transfer Principle immediately yields the following structural properties of $\str$, which we use throughout without further comment.

\begin{lemma}[properties of the star map]
\label{lem:star}
For all $A, B \subseteq X$ and $x \in X$:
\begin{enumerate}[label=\textup{(\roman*)}]
  \item $\str(A \cap B) = \str A \cap \str B$;
  \item $\str(A \cup B) = \str A \cup \str B$;
  \item $\str(X \setminus A) = \str X \setminus \str A$;
  \item $A \subseteq B \Rightarrow \str A \subseteq \str B$\quad
    \textup{(monotonicity)};
  \item $\str\emptyset = \emptyset$ and $\str X \supseteq  X$;
  \item $x \in A$ if and only if $x \in \str A$, for $x \in X$.
\end{enumerate}
\end{lemma}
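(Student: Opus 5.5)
Each clause comes from transferring one bounded quantifier formula, applied to the standard parameters $A,B,X$ (and, where needed, the point $x$ or the empty set).

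For (i), I write the equality $C = A\cap B$ as the bounded formula
\[
\forall y\in C\,(y\in A \wedge y\in B)\ \wedge\ \forall y\in A\,(y\in B \rightarrow y\in C),
\]
with parameters $C=A\cap B$, $A$, $B$. This is true in $V(S)$, so Transfer (NSA2) makes it true of $\str C,\str A,\str B$; that is, $\str(A\cap B)=\str A\cap\str B$. The same scheme gives (ii), where I also quantify over $B$, and (iii), where the formula is $\forall y\in C\,(y\in X\wedge y\notin A)\wedge\forall y\in X\,(y\notin A\rightarrow y\in C)$ with $C=X\setminus A$. For (iv) I transfer $\forall y\in A\,(y\in B)$.

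For (vi), take $x\in X\subseteq S$ and transfer the atomic formula $x\in A$. This gives $\str x\in\str A$, and Extension (NSA1) gives $\str x=x$. Since Transfer is an equivalence, the converse direction holds as well.

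For (v), transfer $\forall y\in\emptyset\,(y\neq y)$ to get $\forall y\in\str\emptyset\,(y\neq y)$, so $\str\emptyset=\emptyset$. The inclusion $X\subseteq\str X$ then follows from (vi) with $A=X$: every $x\in X$ lies in $\str X$.

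The only point needing care is that equalities of sets must be stated through bounded membership formulas rather than extensionality over an unbounded universe. This works because every set involved ($A$, $B$, $X$, their Boolean combinations, and $\emptyset$) lies in $V_1(S)$. Each quantifier can therefore be bounded by one of these sets, and the transferred quantifiers range over the corresponding starred sets.

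Transfer only yields statements about the internal objects $\str A$ and $\str B$, but nothing more is needed here. I expect no real obstacle. The one subtlety is (vi): it relies on NSA1 to identify $\str x$ with $x$, and without that the transfer would only give $\str x\in\str A$.
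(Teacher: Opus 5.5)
Your proposal is correct and follows the paper's proof. Each clause is a direct Transfer of a bounded formula, with Extension supplying $\str x = x$ in (vi) and hence $X \subseteq \str X$ in (v); your encoding of the set equalities as two bounded inclusions (quantifying over $C$ and $A$ rather than over $X$, as the paper does) is slightly tidier, since it avoids separately checking that the starred sets lie inside $\str X$.
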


\begin{proof}
Each statement is a direct application of the Transfer Principle to the corresponding bounded quantifier formula expressing set operations and membership. 
For instance, (i) follows by transferring 
$\forall z \in X\,(z \in A \cap B \leftrightarrow z \in A \wedge z \in B)$, 
which yields 
$\forall z \in \str X\,(z \in \str(A \cap B) \leftrightarrow z \in \str A \wedge z \in \str B)$.
In part (v), $\str\emptyset = \emptyset$ follows by Transfer and $\str X \supseteq X$ from Extension.
Part (vi) holds because $x \in S \subseteq \str S$ by Extension and the formula $x \in A$ is quantifier-free. 
It readily follows by transfer that $x \in A$ if and only if $x \in \str A$.
\end{proof}

\subsection{The halo and nonstandard characterisations}

Fix a topological space $(X, \tau)$. 

\begin{definition}[halo]
\label{def:halo}
For $x \in X$, the \emph{halo} of $x$ is defined as
\[
  \mu(x) = \bigcap_{U \in \Nbd(x)} \str U.
\]
\end{definition}
This set is also called the \emph{monad} of $x$ in the 
literature~\cite{Robinson1966,SalbanyTodorov1999}.
\begin{lemma}
\label{lem:halorefl}
For all $x \in X$, $x \in \mu(x)$.
\end{lemma}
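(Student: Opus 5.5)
The plan is to verify membership of $x$ in each set $\str U$ for $U \in \Nbd(x)$ separately, and then pass to the intersection in Definition~\ref{def:halo}.

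Fix $x \in X$ and take an arbitrary $U \in \Nbd(x)$. By the definition of the open neighbourhood filter, $U$ is open and $x \in U$. Lemma~\ref{lem:star}(vi) says that for a standard point $x \in X$ and a set $A \subseteq X$, we have $x \in A$ if and only if $x \in \str A$. Applying this with $A = U$ gives $x \in \str U$. Underlying this is the Extension Principle (NSA1), which gives $\str x = x$, so that transferring the quantifier-free formula $x \in U$ yields $x \in \str U$.

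Since $U \in \Nbd(x)$ was arbitrary, $x$ lies in every member of the family $\{\str U : U \in \Nbd(x)\}$, and hence in its intersection, which is $\mu(x)$. One degenerate case needs noting: the intersection of an empty family. It never arises, because $X \in \tau$ and $x \in X$ give $X \in \Nbd(x)$, so the family is non-empty and $\mu(x) \subseteq \str X$ is a genuine set.

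No real obstacle is expected. The only subtle point is the identification $\str x = x$ for points of $X \subseteq S$; this is exactly what lets the standard point $x$ itself, and not merely some counterpart of it, belong to $\str U$. It is supplied by NSA1 and has already been packaged into Lemma~\ref{lem:star}(vi).
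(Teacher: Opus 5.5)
Your proof is correct and matches the paper's argument: for each $U \in \Nbd(x)$ you get $x \in \str U$ from Lemma~\ref{lem:star}(vi), then pass to the intersection defining $\mu(x)$. Your extra remark that $X \in \Nbd(x)$ keeps the family non-empty is a harmless refinement that the paper leaves implicit.
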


\begin{proof}
For any $U \in \Nbd(x)$ we have $x \in U$, hence $x \in \str U$ by Lemma~\ref{lem:star}(vi). 
Since this holds for every $U \in \Nbd(x)$, $x \in \bigcap_{U \in \Nbd(x)} \str U = \mu(x)$.
\end{proof}
The following is immediate from the definition of a halo.
\begin{lemma}
\label{lem:halomono}
If $\tau' \subseteq \tau$, then $\mu_\tau(x) \subseteq \mu_{\tau'}(x)$.
\end{lemma}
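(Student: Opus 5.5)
The argument is a direct comparison of the index families in Definition~\ref{def:halo}. Write $\Nbd_\tau(x)=\{U\in\tau : x\in U\}$ and $\Nbd_{\tau'}(x)=\{U\in\tau' : x\in U\}$. Since $\tau'\subseteq\tau$, every open neighbourhood of $x$ in $\tau'$ is also an open neighbourhood of $x$ in $\tau$, so $\Nbd_{\tau'}(x)\subseteq\Nbd_\tau(x)$. The star map is applied to each $U$ as a subset of $X$. It does not depend on which topology $U$ is regarded as belonging to, so $\str U$ denotes the same internal set in both intersections.

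The key step is the elementary fact that intersecting over a larger family gives a smaller set. Take $y\in\mu_\tau(x)=\bigcap_{U\in\Nbd_\tau(x)}\str U$ and any $V\in\Nbd_{\tau'}(x)$. Then $V\in\Nbd_\tau(x)$, so $y\in\str V$. As $V$ was arbitrary, $y\in\bigcap_{V\in\Nbd_{\tau'}(x)}\str V=\mu_{\tau'}(x)$, which gives the inclusion.

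There is no real obstacle here. Transfer and Saturation are not needed, and neither is Lemma~\ref{lem:star}. The one point to state explicitly is that $\str$ acts on sets, not on pairs (set, topology), so that $\str U$ is literally the same object in $\mu_\tau(x)$ and in $\mu_{\tau'}(x)$.
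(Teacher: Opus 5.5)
Your proof is correct and is exactly the argument the paper has in mind when it calls the lemma immediate from the definition of the halo. Since $\Nbd_{\tau'}(x)\subseteq\Nbd_\tau(x)$, the intersection defining $\mu_\tau(x)$ runs over a larger family of sets $\str U$ and is therefore smaller; your explicit remark that $\str U$ depends only on $U$ and not on the ambient topology is the one point worth stating.
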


\begin{definition}[beacon]
\label{def:st}
For $B \subseteq \str X$, the \emph{beacon} of $B$ is defined as
\[
  \st(B) = \{ x \in X \mid \mu(x) \cap B \neq \emptyset \}.
\]
\end{definition}

Each standard point $x$ casts a halo $\mu(x)$ into $\str X$. 
The beacon of $B$ is the set of standard points whose halos reach $B$. 
As we show below, the beacon of $\str A$ is precisely the topological closure of $A$.


The following proposition collects the fundamental nonstandard equivalences.
Parts (i) and (ii) are due to Robinson \cite{Robinson1966}
(see also \cite{Luxemburg1969}); the remaining parts are standard consequences recorded, e.g.\ in
\cite{SalbanyTodorov1999}.

\begin{proposition}[nonstandard characterisations]
\label{prop:nschar}
Let $(X,\tau)$ be a topological space with polysaturated nonstandard extension. For $A \subseteq X$ and $x \in X$:
\begin{enumerate}[label=\textup{(\roman*)}]
  \item Closure:
    $x \in \cl(A)$ iff $\mu(x) \cap \str A \neq \emptyset$.
    Equivalently, $\cl(A) = \st(\str A)$. 
    So $A$ is closed  iff $A=\st(\str A)$.

  \item Open set:
    $A \in \tau$ iff $\mu(x) \subseteq \str A$ for every $x \in A$.

  \item Limit point:
    $x$ is a limit point of $A$ iff $\mu(x) \cap (\str A \setminus \{x\}) \neq \emptyset$.

  \item Isolated point:
    $x \in A$ is isolated in $A$ iff $\mu(x)\cap\str A= \{x\}$.

  \item $T_1$ space:
    $(X,\tau)$ is $T_1$ iff $\mu(x) \cap X = \{x\}$ for every $x \in X$.

  \item Hausdorff space:
    $(X,\tau)$ is Hausdorff iff $\mu(x) \cap \mu(y) = \emptyset$ for all distinct $x, y \in X$.
\end{enumerate}
\end{proposition}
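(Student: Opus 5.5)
The engine of the whole proof is a single equivalence: for $x \in X$ and $A \subseteq X$, $\mu(x) \cap \str A \neq \emptyset$ iff $U \cap A \neq \emptyset$ for every $U \in \Nbd(x)$. Parts (i), (iii) and (iv) are instances of it, (ii) is its complement form, and (v), (vi) apply the same two moves to points.

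\textbf{Core equivalence.} For the forward direction, take $\xi \in \mu(x) \cap \str A$ and fix $U \in \Nbd(x)$. Then $\xi \in \str U \cap \str A = \str(U \cap A)$ by Lemma~\ref{lem:star}(i). So $\str(U\cap A) \neq \emptyset$, and Lemma~\ref{lem:star}(v) together with Transfer gives $U \cap A \neq \emptyset$.

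For the converse, suppose every $U \cap A$ with $U \in \Nbd(x)$ is nonempty. Consider the family $\{\str(U \cap A) : U \in \Nbd(x)\}$. Its members are internal. It has the finite intersection property, because $\Nbd(x)$ is closed under finite intersections and $\str$ commutes with finite intersections. Its cardinality is at most $|\tau| \le |V(S)|$. Saturation (NSA3) therefore yields a point of $\bigcap_U \str(U\cap A) \subseteq \mu(x) \cap \str A$.

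This saturation step is the one genuinely nonstandard ingredient, and the main care needed is verifying that the family meets the cardinality bound and the internality requirement. Everything else is bookkeeping with Lemma~\ref{lem:star}.

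\textbf{Parts (i)–(iv).}
\begin{itemize}
\item (i): apply the core equivalence directly. The reformulation $\cl(A) = \st(\str A)$ is just Definition~\ref{def:st}.
\item (iii): replace $A$ by $A \setminus \{x\}$. Note that $\str(A \setminus\{x\}) = \str A \setminus \{x\}$, using Lemma~\ref{lem:star}(iii) and the fact that $\str\{x\} = \{x\}$ by Transfer of $\forall z \in \str X\,(z \in \str\{x\} \leftrightarrow z = x)$.
\item (iv): this is the negation of (iii) for $x \in A$. Since $x \in \mu(x) \cap \str A$ by Lemma~\ref{lem:halorefl} and Lemma~\ref{lem:star}(vi), $x$ is isolated in $A$ iff nothing else lies in $\mu(x)\cap\str A$.
\item (ii): if $A$ is open and $x \in A$, then $A \in \Nbd(x)$, so $\mu(x) \subseteq \str A$. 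Conversely, if $A$ is not open, pick $x \in A \setminus \Int(A)$. Then $x \in \cl(X \setminus A)$, and (i) gives a point of $\mu(x) \cap \str(X\setminus A) = \mu(x) \setminus \str A$.
\end{itemize}

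\textbf{Parts (v) and (vi).}
\begin{itemize}
\item (v): for standard $y \neq x$, we have $y \in \mu(x)$ iff $y \in U$ for every $U \in \Nbd(x)$, by Lemma~\ref{lem:star}(vi). The space is $T_1$ exactly when no such $y$ exists.
\item (vi), Hausdorff $\Rightarrow$ disjoint halos: take disjoint $U \ni x$, $V \ni y$. Then $\mu(x)\cap\mu(y) \subseteq \str U \cap \str V = \str\emptyset = \emptyset$.
\item (vi), disjoint halos $\Rightarrow$ Hausdorff: suppose every $U \in \Nbd(x)$ meets every $V \in \Nbd(y)$. Then $\{\str(U\cap V) : U \in \Nbd(x),\ V \in \Nbd(y)\}$ is a family of internal sets with the finite intersection property. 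Saturation again produces a point in $\mu(x) \cap \mu(y)$, a contradiction.
\end{itemize}
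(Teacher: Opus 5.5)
The paper gives no proof of this proposition. It attributes (i) and (ii) to Robinson and the remaining parts to Salbany and Todorov, so there is no in-paper argument to compare against.

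Your self-contained proof is correct. Its key step applies Saturation to the internal family $\{\str(U\cap A) : U\in\Nbd(x)\}$. You check that the family has cardinality at most $|\tau|\le|V(S)|$, and that it has the finite intersection property because $\Nbd(x)$ is closed under finite intersections and $\str$ commutes with them. This is the same device the paper uses later in the proofs of Proposition~\ref{thm:wK4} and Theorem~\ref{thm:omegaacc}, so your argument fits the paper's own toolkit.

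A common alternative uses Saturation only once, to obtain an internal $V\in\str\tau$ with $x\in V\subseteq\mu(x)$. Parts (i)--(iii) and (vi) then follow by Transfer of the standard neighbourhood statements. That route isolates the nonstandard input in a single lemma. Your route instead reruns a saturation argument separately for (vi); this costs nothing in correctness.

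One cosmetic slip: in (iii), the formula you transfer should be the standard one, $\forall z\in X\,(z\in\{x\}\leftrightarrow z=x)$. What you wrote is its transferred form.
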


\section{The Halo Operator}
\label{sec:param}

\subsection{Logic}

Let $\mathbf{Prop} = \{p, q, r, \ldots\}$ denote the set of
propositional variables. 
The modal language has a single diamond primitive
$\diam$.
The language is defined inductively as follows:
\[
  \varphi \;::=\; p \;\mid\; \bot \;\mid\; \neg\varphi \;\mid\;
    \varphi \wedge \psi \;\mid\; \diam\varphi.
\]
Boolean connectives $\top, \vee, \to, \leftrightarrow$ are standard abbreviations. The box is always the De Morgan dual:
$\sq\varphi := \neg\diam\neg\varphi$.
The axiomatic systems used in this paper are all normal modal logics, i.e.\ include the Axiom~K. 
We recall the relevant axiom schemas:
    \begin{itemize}
  \item[(K)] $\sq(\varphi \to \psi) \to (\sq\varphi \to \sq\psi)$
  \item[(T)] $\sq\varphi \to \varphi$ 
  \item[(4)] $\sq\varphi \to \sq\sq\varphi$ 
  \item[(w4)] $\diam\diam\varphi \to \diam\varphi \vee \varphi$
  \item[(L)] $\square(\square\varphi\to\varphi)\to \square \varphi$
\end{itemize}
All of the axiomatic systems discussed in this paper include classical tautologies, Modus Ponens and Necessitation.  
The axiomatic systems we consider are:
\begin{center}
    $\mathsf{S4}$ = K + T + 4; $\;\mathsf{K4}$ = K + 4; $\;\mathsf{wK4}$ = K + w4; $\;\mathsf{GL}$ = K + L.
\end{center} 
\begin{definition}[topological model]
\label{def:model}
A \emph{topological model} is a pair $\mathcal{M} = ((X,\tau), \nu)$ where $(X,\tau)$ is a topological space and $\nu : \mathbf{Prop} \to \mathcal{P}(X)$ is a valuation. 
We work inside a fixed polysaturated nonstandard extension
of $X$.

The \emph{extension} $\sem{\varphi}^{\mathcal{M}} \subseteq X$ is defined inductively in the obvious way on Boolean connectives, with the modal clause determined by the choice of \emph{exclusion function}.
\end{definition}

\begin{definition}[halo operator]
\label{def:excl}
Given a topological model $\mathcal{M}$ and an exclusion function $E:X\to\mathcal{P}(\str X)$, the \emph{halo
operator} $\diam_E$ is defined by the semantic clause:
\[
  \sem{\diam_E\varphi}^{\mathcal{M}}
  = \bigl\{x \in X \;\bigm|\;
    \mu(x) \cap \bigl(\str \sem{\varphi}^{\mathcal{M}} \setminus E(x)\bigr)
    \neq \emptyset \bigr\}.
\]
We write $x \models \varphi$ for $x \in \sem{\varphi}^{\mathcal{M}}$ under the semantics determined by $E$.
We omit the superscript $\mathcal{M}$ when clear from context.
\end{definition}

We focus on four choices for $E$ in this paper: the purely standard case, where $E(x)=\str X\setminus X$; the closure case, where $E(x)=\varnothing$; the Cantor derivative case, where $E(x)=\{x\}$; and the purely nonstandard case, where $E(x)=X$.
We denote the resulting modalities by $\lozenge_{\rm st}$, $\lozenge_c$, $\lozenge_d$ and $\lozengens$, respectively.
The names will be justified in Section~\ref{sec:closure}, where we show that the two operators $\lozenge_c$ and $\lozenge_d$ coincide with the closure and the Cantor derivative, respectively.

We establish the relationship between the four modalities.
\begin{proposition}
\label{prop:order}
For every topological model $\mathcal{M}$ and formula $\varphi$:
\[
  \sem{\lozengens\varphi}
  \;\subseteq\;
  \sem{\diam_d\varphi}
  \;\subseteq\;
  \sem{\diam_c\varphi}.
\]
The purely standard operator satisfies $\sem{\diam_\mathrm{st}\varphi} \subseteq
\sem{\diam_c\varphi}$, but is incomparable with $\diam_d$.
\end{proposition}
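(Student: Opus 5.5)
The inclusions come from anti-monotonicity of $\diam_E$ in the exclusion function. If $E(x)\subseteq E'(x)$ for every $x$, then
\[
\str\sem{\varphi}\setminus E'(x)\subseteq \str\sem{\varphi}\setminus E(x),
\]
so any witness in $\mu(x)$ for $\diam_{E'}$ is also a witness for $\diam_E$, and hence $\sem{\diam_{E'}\varphi}\subseteq\sem{\diam_E\varphi}$. The three exclusion sets satisfy $\varnothing\subseteq\{x\}\subseteq X$, where $\{x\}\subseteq X$ uses Lemma~\ref{lem:star}(v) (via Extension). This gives $\sem{\lozengens\varphi}\subseteq\sem{\diam_d\varphi}\subseteq\sem{\diam_c\varphi}$. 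Likewise $\varnothing\subseteq \str X\setminus X$ gives $\sem{\diam_\mathrm{st}\varphi}\subseteq\sem{\diam_c\varphi}$. The semantic clause is applied with the same truth set $\sem{\varphi}$ throughout, so no induction on formulas is needed.

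Incomparability of $\diam_\mathrm{st}$ and $\diam_d$ needs two counterexamples, each with $\varphi=p$.

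For $\sem{\diam_\mathrm{st}p}\not\subseteq\sem{\diam_d p}$, take any space with a point $x$ and set $\nu(p)=\{x\}$. A $\diam_\mathrm{st}$-witness must lie in $\mu(x)\cap\str\{x\}\cap X$. Here $\str\{x\}=\{x\}$ by Transfer of $\forall z\in\{x\}\,(z=x)$ together with Lemma~\ref{lem:star}(vi), and $x\in\mu(x)$ by Lemma~\ref{lem:halorefl}. So $x\models\diam_\mathrm{st}p$. A $\diam_d$-witness must lie in $\mu(x)\cap(\{x\}\setminus\{x\})=\varnothing$, so $x\not\models\diam_d p$.

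For $\sem{\diam_d p}\not\subseteq\sem{\diam_\mathrm{st}p}$, I would take $\mathbb{R}$ with the usual topology, $x=0$ and $\nu(p)=\mathbb{R}\setminus\{0\}$. Then $0$ is a limit point of $\nu(p)$, so Proposition~\ref{prop:nschar}(iii) gives $0\models\diam_d p$. For the other side, $\mathbb{R}$ is $T_1$, so $\mu(0)\cap X=\{0\}$ by Proposition~\ref{prop:nschar}(v). Since $0\notin\str\nu(p)$ by Lemma~\ref{lem:star}(vi), there is no standard witness, and $0\not\models\diam_\mathrm{st}p$. Any infinite $T_1$ space with a non-isolated point would serve equally well.

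The only delicate points are the two small facts $\str\{x\}=\{x\}$ and that $\mu(x)$ contains no standard points other than $x$ in a $T_1$ space. The latter is exactly Proposition~\ref{prop:nschar}(v), so I expect no real obstacle.
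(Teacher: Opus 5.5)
Your proposal is correct and takes essentially the same route as the paper. The inclusions follow from $\str A\setminus X\subseteq \str A\setminus\{x\}\subseteq \str A$ (your anti-monotonicity in $E$ is this observation stated generally), and incomparability is shown by the same two counterexamples, with your valuation $\nu(p)=\mathbb{R}\setminus\{0\}$ in place of the paper's $\{1/n : n\geq 1\}$, an immaterial difference.
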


\begin{proof}
The inclusions $\sem{\lozengens\varphi} \subseteq \sem{\diam_d\varphi} \subseteq \sem{\diam_c\varphi}$ follow from
$\str A \setminus X \subseteq \str A
\setminus \{x\} \subseteq \str A$.
For $\sem{\diam_\mathrm{st}\varphi} \subseteq \sem{\diam_c\varphi}$, every standard witness in
$\mu(x) \cap \sem{\varphi}$ is also a witness for $\diam_c$, so $\sem{\diam_\mathrm{st}\varphi} \subseteq \sem{\diam_c\varphi}$.

For incomparability, consider an arbitrary topological space $(X,\tau)$, a point $x \in X$, and set $\nu(p) = \{x\}$. 
Since $x \in \mu(x)$ always, we have $\mu(x) \cap \sem{p} = \{x\} \neq \emptyset$, so
$x \models \diam_\mathrm{st} p$. But since
$\mu(x)\cap (\{x\} \setminus \{x\}) = \emptyset$ then
$x \not\models \diam_d p$.
Hence $\sem{\lozengest\varphi} \not\subseteq \sem{\lozenge_d\varphi}$ in general.

For the other direction, consider $\mathbb{R}$ with the usual topology, let $x = 0$ and $\nu(p) = \{1/n : n \geq 1\}$. 
Then since every neighbourhood of $0$ contains points of $\nu(p) \setminus \{0\}$, $0 \models \diam_d p$. 
But $\mathbb{R}$ is $T_1$, so
$\mu(0) \cap X = \{0\}$ by Proposition~\ref{prop:nschar}(v).
Since $0 \notin \nu(p)$, it follows that $\mu(0) \cap \sem{p} = \emptyset$, and so
$0 \not\models \diam_\mathrm{st} p$.
Hence $\sem{\lozenge_d\varphi} \not\subseteq \sem{\lozengest\varphi}$ in general.
\end{proof}
\begin{proposition}
\label{prop:decomp}
For every topological model $\mathcal{M}$ and formula $\varphi$:
\[
  \sem{\diam_c\varphi} = \sem{\diam_\mathrm{st}\varphi} \cup \sem{\lozengens\varphi}.
\]
\end{proposition}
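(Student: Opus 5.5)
The plan is to fix a point $x\in X$, write $A=\sem{\varphi}$, and unfold the three semantic clauses of Definition~\ref{def:excl} with $E(x)=\varnothing$, $E(x)=\str X\setminus X$ and $E(x)=X$. Membership of $x$ in the three sets then reads
\[
x\models\diam_c\varphi \iff \mu(x)\cap\str A\neq\emptyset,\quad
x\models\diam_\mathrm{st}\varphi \iff \mu(x)\cap(\str A\setminus(\str X\setminus X))\neq\emptyset,\quad
x\models\lozengens\varphi \iff \mu(x)\cap(\str A\setminus X)\neq\emptyset .
\]
The whole argument is then a set-theoretic splitting of $\str A$ along the partition $\str X = X \cup (\str X\setminus X)$.

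The key step is the identity $\str A\setminus(\str X\setminus X)=\str A\cap X$, which holds because $\str A\subseteq\str X$ (Lemma~\ref{lem:star}(iv), since $A\subseteq X$). By Lemma~\ref{lem:star}(vi), $\str A\cap X = A$, but only the form $\str A\cap X$ is needed here. Hence
\[
\str A = (\str A\cap X)\cup(\str A\setminus X),
\]
and intersecting with $\mu(x)$ gives
\[
\mu(x)\cap\str A=\bigl(\mu(x)\cap(\str A\cap X)\bigr)\cup\bigl(\mu(x)\cap(\str A\setminus X)\bigr).
\]
A union is nonempty if and only if one of its two parts is nonempty. This yields $x\models\diam_c\varphi$ if and only if $x\models\diam_\mathrm{st}\varphi$ or $x\models\lozengens\varphi$, which is the claimed equality.

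Both inclusions are immediate. The direction $\supseteq$ was in fact already obtained in Proposition~\ref{prop:order}, and the new content is the direction $\subseteq$, which is a single case split on whether the witness in $\mu(x)\cap\str A$ is standard or not. There is no genuine obstacle. The only point needing care is that no Transfer or Saturation argument is required, even though $X$ is external in $\str X$: the argument uses only the elementwise partition of $\str X$ into standard and nonstandard points, which is legitimate for arbitrary subsets.
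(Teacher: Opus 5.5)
Your proof is correct and is essentially the paper's argument: the paper also takes $\supseteq$ from Proposition~\ref{prop:order} and proves $\subseteq$ by a case split on whether a witness $y\in\mu(x)\cap\str\sem{\varphi}$ is standard or nonstandard. Your version handles both directions at once via the partition $\str A=(\str A\cap X)\cup(\str A\setminus X)$, and you rightly observe that only $\str A\subseteq\str X$ is needed, with no Transfer or Saturation.
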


\begin{proof}
The inclusion $\supseteq$ follows from Proposition~\ref{prop:order}.
For $\subseteq$, suppose $x \in \sem{\diam_c\varphi}$, so there exists $y \in \mu(x) \cap \str\sem{\varphi}$.
Either $y \in X$ or $y \notin X$.
In the first case, $y \in \mu(x) \cap \sem{\varphi}$ by Lemma~\ref{lem:star}(vi), so $x \in \sem{\diam_\mathrm{st}\varphi}$.
In the second case $y \in \mu(x) \cap (\str\sem{\varphi} \setminus X)$, so $x \in \sem{\lozengens\varphi}$.
\end{proof}
\subsection{Examples}
\label{sec:examples}

We illustrate all four operators on two examples. The first is a $T_1$ space
($\mathbb{R}$), which demonstrates the collapse $\lozengens = \diam_d$
and the incomparability of $\diam_\mathrm{st}$ with $\diam_d$. The second is
a non-$T_1$ space (the Sierpi\'{n}ski space), chosen specifically to separate $\diam_d$ from $\lozengens$.

\begin{example}[the real line]
\label{ex:R}
Let $X = \mathbb{R}$ with the standard topology and $\nu(p) = (0,1) \cup \{2\}$.
Since $\mathbb{R}$ is $T_1$, Proposition~\ref{prop:nschar}(v) implies $\mu(x) \cap \mathbb{R} = \{x\}$ for every $x$. 
The halo  $\mu(x) = \{y \in \str \mathbb{R} : |y-x| < r \text{ for all standard } r>0\}$ consists of all hyperreals infinitesimally close to $x$.
By Transfer, $\str(0,1) = \{y \in \str \mathbb{R} : 0 < y < 1\}$ and $\str\{c\} = \{c\}$, for each $c \in \mathbb{R}$.

At $x=0$, each positive infinitesimal $\varepsilon$ satisfies $\varepsilon \in \mu(0)$,
$\varepsilon \in \str(0,1) \subset \str \nu(p)$, and $\varepsilon \notin \mathbb{R}$.
Hence $0 \in \sem{\diam_c p}$; since $\varepsilon \neq 0$, $0 \in \sem{\diam_d p}$; since $\varepsilon \notin \mathbb{R}$, $0 \in \sem{\lozengens p}$. 
But $0 \notin \nu(p)$, so $0 \notin \sem{\diam_\mathrm{st} p}$.

At $x=2$, the point $2$ is isolated in $\nu(p)$.
For each $y \in \mu(2)$ with $y \neq 2$, $|y-2|$ is a positive infinitesimal, so $y \notin \str(0,1)$.
Thus $\mu(2) \cap (\str \nu(p) \setminus \{2\}) = \emptyset$, 
so $2 \notin \sem{\diam_d p}$ and $2 \notin \sem{\lozengens p}$. 
However $2 \in \nu(p)$, so $2 \in \sem{\diam_\mathrm{st} p}$; and $2 \in \cl(\nu(p))$, so $2 \in \sem{\diam_c p}$.
\end{example}
The example above does not separate $\lozenge_d$ from $\lozenge_{ns}$.
We show that the two can be separated even on a two-point topological space.
\begin{example}[the Sierpi\'{n}ski space]
\label{ex:sierpinski}
Let $X = \{a,b\}$ with topology $\tau = \{\emptyset, \{a\}, \{a,b\}\}$.
The open sets containing $a$ are $\{a\}$ and $X$; the only open set containing $b$ is $X$. This space is not $T_1$: the singleton $\{a\}$ is open, hence $\{b\} = X \setminus \{a\}$ is closed, but
$\{a\} = X \setminus \{b\}$ is not closed since $\{b\}$ is not open.
Let $\nu(p) = \{a\}$.
Since $\{a\}$ and $X$ are the open sets containing $a$, we obtain
 $ \mu(a) = \str\{a\} \cap \str X = \str\{a\}$.
Since $X$ is the only open set containing $b$, $
  \mu(b) = \str X$.
Recall that by Transfer finite sets are fixed by $\str$.
In particular, $\str \nu(p) = \str\{a\} = \{a\}$.
So $a$ is in $\sem{ \lozengest p}$ and $\sem{\lozenge_c p}$ but not in $\sem{\lozenge_d p}$ or $\sem{\lozengens p}$.

At $x = b$, we have $\mu(b) \cap \{a\} = \{a\} \neq \emptyset$ and 
$\mu(b) \cap \str\{a\} = \{a\} \neq \emptyset$, so 
$b \models \diam_\mathrm{st} p$ and $b \models \diam_c p$.
Also $\mu(b) \cap (\{a\} \setminus \{b\}) = \{a\} \neq \emptyset$, 
so $b \models \diam_d p$.
But $\mu(b) \cap (\{a\} \setminus X) = \{a\} \setminus \{a,b\} = \emptyset$, 
so $b \not\models \lozengens p$.
\end{example}

\section{The purely standard operator}
\label{subsec:st}

The purely standard modality $\diam_\mathrm{st}$ is obtained by setting 
$E(x) = \str X \setminus X$, so that only standard points are admitted as witnesses.
\begin{proposition}
\label{prop:stgeneral}
On arbitrary topological spaces $(X,\tau)$, the operator $\diam_\mathrm{st}$ computes exactly the Kripke diamond over the \emph{specialisation preorder} $\leq_\tau$ defined by $x \leq_\tau y$ iff $x \in \cl(\{y\})$, i.e.\
\[
  x \models \diam_\mathrm{st}\varphi
  \iff
  \exists\, y \in X\;:\; x \leq_\tau y \text{ and } y \models \varphi.
\]
\end{proposition}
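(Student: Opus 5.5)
Unfolding the semantic clause with $E(x)=\str X\setminus X$ gives $\str\sem{\varphi}\setminus(\str X\setminus X)=\str\sem{\varphi}\cap X$. By Lemma~\ref{lem:star}(vi) this equals $\sem{\varphi}$, since a standard $y$ lies in $\str\sem{\varphi}$ iff $y\in\sem{\varphi}$. Hence
\[
x\models\diam_\mathrm{st}\varphi \iff \exists\,y\in X:\ y\in\mu(x)\text{ and } y\models\varphi .
\]
It therefore suffices to prove the purely topological fact that, for standard $x,y\in X$,
\[
y\in\mu(x)\iff x\in\cl(\{y\}),
\]
that is, $y\in\mu(x)$ iff $x\leq_\tau y$.

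We prove this equivalence directly from Definition~\ref{def:halo}. By definition, $y\in\mu(x)$ iff $y\in\str U$ for every $U\in\Nbd(x)$. By Lemma~\ref{lem:star}(vi), since $y$ is standard, this holds iff $y\in U$ for every open $U\ni x$. In turn, that holds iff every open neighbourhood of $x$ meets $\{y\}$, i.e.\ iff $x\in\cl(\{y\})$.

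Alternatively, one can use Proposition~\ref{prop:nschar}(i) with $A=\{y\}$. Since $\str\{y\}=\{y\}$ by Transfer, that result gives $x\in\cl(\{y\})$ iff $\mu(x)\cap\{y\}\neq\emptyset$, which is iff $y\in\mu(x)$.

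Combining the two steps yields the stated equivalence. No separation axiom or saturation is used anywhere, so the result holds on arbitrary spaces. There is no real obstacle here. The only point needing care is the orientation of the preorder: we must check that the witness $y$ lying in the halo of $x$ corresponds to $x\in\cl(\{y\})$ and not to $y\in\cl(\{x\})$. Example~\ref{ex:sierpinski} confirms this orientation: there $a\in\mu(b)$ and $b\in\cl(\{a\})$.
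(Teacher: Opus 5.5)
Your proposal is correct and follows essentially the same route as the paper: you reduce the witness condition to $\mu(x)\cap\sem{\varphi}\neq\emptyset$ via Lemma~\ref{lem:star}(vi), then show directly from the definition of the halo that $\mu(x)\cap X={\uparrow}x$, with the orientation of the preorder handled correctly. One small caveat about your alternative route: citing Proposition~\ref{prop:nschar}(i) as a black box imports a result whose general proof uses Saturation, so your remark that no saturation is used holds only for the direct argument (for a singleton $A=\{y\}$ it is not actually needed).
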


\begin{proof}
Using $\str \sem{\varphi} \subseteq \str X$ and $\str A \cap X = A$ for all $A \subseteq X$ (Lemma~\ref{lem:star}(vi)):
\[
  x \models \diam_\mathrm{st}\varphi
  \iff
  \mu(x) \cap \bigl(\str \sem{\varphi} \setminus (\str X \setminus X)\bigr) \neq \emptyset
  \iff
  \mu(x) \cap \sem{\varphi} \neq \emptyset.
\]
For $y \in X$, $y \in \mu(x)$ iff $y \in \str U$ for every $U \in \Nbd(x)$, and by Lemma~\ref{lem:star}(vi), $y \in \str U$ iff $y \in U$ for standard $y$ and $U$.
Hence $\mu(x) \cap X = {\uparrow}x := \{y \in X : x \leq_\tau y\}$, and so
\[
  x \models \diam_\mathrm{st}\varphi
  \iff \mu(x) \cap \sem{\varphi} \neq \emptyset
  \iff {\uparrow}x \cap \sem{\varphi} \neq \emptyset
  \iff \exists\, y \geq_\tau x\;(y \models \varphi). \qedhere
\]
\end{proof}

\begin{corollary}
\label{cor:stdeg}
On every $T_1$ topological space, $\diam_\mathrm{st}\varphi \equiv \varphi$ for every formula $\varphi$.
\end{corollary}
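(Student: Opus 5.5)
The plan is to combine Proposition~\ref{prop:stgeneral} with the $T_1$ characterisation of halos in Proposition~\ref{prop:nschar}(v). By Proposition~\ref{prop:stgeneral}, for every formula $\varphi$ and every $x \in X$ we have $x \models \diam_\mathrm{st}\varphi$ iff $\mu(x) \cap \sem{\varphi} \neq \emptyset$. This holds because $\str\sem{\varphi} \setminus (\str X \setminus X) = \str\sem{\varphi} \cap X = \sem{\varphi}$ by Lemma~\ref{lem:star}(vi). So everything reduces to knowing which standard points lie in $\mu(x)$.

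On a $T_1$ space, Proposition~\ref{prop:nschar}(v) gives $\mu(x) \cap X = \{x\}$. Since $\sem{\varphi} \subseteq X$, it follows that
\[
\mu(x) \cap \sem{\varphi} = \mu(x) \cap X \cap \sem{\varphi} = \{x\} \cap \sem{\varphi},
\]
which is nonempty iff $x \in \sem{\varphi}$. Hence $\sem{\diam_\mathrm{st}\varphi} = \sem{\varphi}$ for every $\varphi$, which is exactly the claimed equivalence $\diam_\mathrm{st}\varphi \equiv \varphi$.

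One could also argue through the specialisation preorder. In a $T_1$ space singletons are closed, so $x \leq_\tau y$ iff $x \in \cl(\{y\}) = \{y\}$, i.e.\ $x = y$. Then ${\uparrow}x = \{x\}$, and the Kripke diamond over the identity relation is trivial. I would mention this as a remark, but use the halo computation as the main line.

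No step is a real obstacle. The only point needing care is that the semantic clause uses $\str\sem{\varphi}$ while the $T_1$ characterisation speaks of $\mu(x) \cap X$. The bridge is the identity $\str A \cap X = A$ from Lemma~\ref{lem:star}(vi), which Proposition~\ref{prop:stgeneral} already invokes. The argument is uniform in $\varphi$, so no induction on formulas is required.
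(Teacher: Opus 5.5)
Your proof is correct and is essentially the paper's argument. The paper combines Proposition~\ref{prop:stgeneral} with the fact that closed singletons make $\leq_\tau$ the identity, which is exactly your closing remark; your main line instead uses Proposition~\ref{prop:nschar}(v) to get $\mu(x)\cap X=\{x\}$, which is the same fact, since the proof of Proposition~\ref{prop:stgeneral} shows $\mu(x)\cap X={\uparrow}x$.
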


\begin{proof}
On a $T_1$ space, $\cl(\{x\}) = \{x\}$ for every $x$, so $\leq_\tau$ is equality.
Hence $x\models \diam_\mathrm{st}\varphi$
 iff $x \models \varphi$.
\end{proof}
Recall that a topological space is \emph{Alexandroff} if arbitrary intersections of open sets are open; equivalently, every point has a smallest open neighbourhood.
Every finite space is Alexandroff.
\begin{corollary}
\label{cor:stalexandroff}
On an Alexandroff topological space $(X,\tau)$, the operator $\diam_\mathrm{st}$ coincides with the standard Kripke semantics for the specialisation preorder $\leq_\tau$.
The complete modal logic of $\diam_\mathrm{st}$ over  Alexandroff spaces is $\mathsf{S4}$.
\end{corollary}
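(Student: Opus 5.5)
The plan has two parts, one for each claim of the corollary.

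\textbf{First claim.} This is essentially already contained in Proposition~\ref{prop:stgeneral}. That proposition holds on every space and shows that
\[
x \models \diam_\mathrm{st}\varphi \iff \exists\, y\,(x\leq_\tau y \wedge y\models\varphi).
\]
This is exactly the Kripke clause for the frame $(X,\leq_\tau)$. So the semantic clauses agree by induction on $\varphi$: the Boolean cases are identical, and the modal case is the proposition. Hence $\sem{\varphi}$ under $\diam_\mathrm{st}$ equals the Kripke extension of $\varphi$ in $(X,\leq_\tau,\nu)$. I will also note that on an Alexandroff space ${\uparrow}x$ is the smallest open neighbourhood of $x$, and that $\diam_\mathrm{st}$ is then the interior--closure dual in the usual Alexandroff correspondence. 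Alexandroffness is not needed for the equivalence itself. It matters for the completeness claim, because every preorder arises this way.

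\textbf{Second claim, soundness.} The relation $\leq_\tau$ is reflexive, since $x\in\cl\{x\}$. It is transitive, since $x\in\cl\{y\}$ and $y\in\cl\{z\}\subseteq$ the closed set $\cl\{z\}$ give $\cl\{y\}\subseteq\cl\{z\}$. So every frame $(X,\leq_\tau)$ is a preorder and validates $\mathsf{T}$ and $\mathsf{4}$. Via the first claim, $\mathsf{S4}$ is therefore sound.

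\textbf{Second claim, completeness.} I will use the classical fact that $\mathsf{S4}$ is complete for finite preorders; this follows from the finite model property via filtration. Given a non-theorem $\varphi$, take a finite preordered frame $(W,R)$ and a valuation refuting $\varphi$. Equip $W$ with the Alexandroff topology of up-sets, $\tau_R=\{U : U \text{ upward closed}\}$. Then $\cl\{y\}={\downarrow}y$, so $x\leq_{\tau_R} y$ iff $x\in{\downarrow}y$ iff $xRy$. The specialisation preorder thus recovers $R$. By the first claim, $\diam_\mathrm{st}$ on $(W,\tau_R)$ refutes $\varphi$, and $W$ is finite and hence Alexandroff.

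\textbf{Main obstacle.} The only delicate point is the verification that the specialisation order of the up-set topology is exactly $R$, with the correct orientation. The paper's convention $x\leq_\tau y$ iff $x\in\cl\{y\}$ requires open sets to be up-sets, not down-sets. I would check this orientation carefully against ${\uparrow}x=\mu(x)\cap X$ from the proof of Proposition~\ref{prop:stgeneral}.
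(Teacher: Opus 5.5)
Your proposal is correct and takes essentially the paper's route: reduce $\diam_\mathrm{st}$ to the Kripke diamond over $\leq_\tau$ via Proposition~\ref{prop:stgeneral}, then appeal to the completeness of $\mathsf{S4}$ for (finite) preorders. You go further than the paper in two places: you check explicitly that every finite preorder is the specialisation preorder of its up-set topology, with the orientation correct, which the paper leaves implicit in ``the complete logic of such frames is $\mathsf{S4}$''; and you rightly note that the first claim does not need the Alexandroff hypothesis, since Proposition~\ref{prop:stgeneral} already holds on all spaces and makes the paper's detour through smallest open neighbourhoods unnecessary.
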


\begin{proof}
In an Alexandroff space every point has a smallest open neighbourhood $\mathcal{U}(x)$, so $\mu(x) \cap X = \mathcal{U}(x)$, which is precisely the upset $\{y : x \leq_\tau y\}$.
By Proposition~\ref{prop:stgeneral}, $\diam_\mathrm{st}$ is the Kripke diamond over the preorder $(X, \leq_\tau)$, which is a reflexive, transitive frame.
The complete logic of such frames is $\mathsf{S4}$ \cite{BlackburnDeRijkeVenema2001}.
\end{proof}

\section{The closure operator and the Cantor derivative operator}
\label{sec:closure}

It readily follows that the operator $\diam_c$ is exactly
the topological closure.

\begin{proposition}
\label{prop:clequiv}
For every model $\mathcal{M}$ and formula $\varphi$:
\[
  \sem{\diam_c\varphi}^{\mathcal{M}} = \cl(\sem{\varphi}^{\mathcal{M}}).
\]
In particular, $\sem{\sq_c\varphi}^{\mathcal{M}} = \Int(\sem{\varphi}^{\mathcal{M}})$.
\end{proposition}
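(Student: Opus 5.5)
The plan is to unfold the semantic clause of Definition~\ref{def:excl} with $E(x)=\varnothing$ and then appeal directly to Proposition~\ref{prop:nschar}(i). With nothing excluded, the clause becomes
\[
  x \models \diam_c\varphi \iff \mu(x)\cap\str\sem{\varphi}\neq\emptyset .
\]
Here $\sem{\varphi}\subseteq X$ is a standard set, so Proposition~\ref{prop:nschar}(i) applies to $A=\sem{\varphi}$. It gives $\mu(x)\cap\str\sem{\varphi}\neq\emptyset$ iff $x\in\cl(\sem{\varphi})$. Equivalently, in beacon notation, $\sem{\diam_c\varphi}=\st(\str\sem{\varphi})=\cl(\sem{\varphi})$.

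For the box clause I will use De Morgan duality together with Lemma~\ref{lem:star}. The chain is
\[
  \sem{\sq_c\varphi} = X\setminus\sem{\diam_c\neg\varphi} = X\setminus\cl(X\setminus\sem{\varphi}) = \Int(\sem{\varphi}).
\]
The last equality is the usual interior–closure duality.

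There is no real obstacle here; the content lives in Proposition~\ref{prop:nschar}(i). If I wanted the argument to be self-contained, I would prove that equivalence in two directions.

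\emph{From the halo to the closure.} Suppose $y\in\mu(x)\cap\str A$ and let $U\in\Nbd(x)$. Then $y\in\str U\cap\str A=\str(U\cap A)$, so $\str(U\cap A)\neq\emptyset$. By Lemma~\ref{lem:star}(v), $U\cap A\neq\emptyset$, which shows $x\in\cl(A)$.

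\emph{From the closure to the halo.} This is the only nontrivial direction, and I would use Saturation. The family $\{\str(U\cap A): U\in\Nbd(x)\}$ consists of internal sets, and its cardinality is bounded by $|V(S)|$. It has the finite intersection property because $\Nbd(x)$ is closed under finite intersections and each $U\cap A$ is nonempty when $x\in\cl(A)$. Saturation then produces a point in $\bigcap_U\str(U\cap A)=\mu(x)\cap\str A$.
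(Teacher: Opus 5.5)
Your proposal is correct and matches the paper's proof, which simply instantiates Proposition~\ref{prop:nschar}(i) at $A=\sem{\varphi}$. Your De Morgan step for $\sq_c$ and your optional Saturation proof of Proposition~\ref{prop:nschar}(i) are both sound.
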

\begin{proof}
This is immediate from Proposition~\ref{prop:nschar}(i) with $A = \sem{\varphi}$.
\end{proof}
Soundness and completeness for the closure semantics 
are due to McKinsey and Tarski~\cite{McKinseyTarski1944}.
\begin{theorem}[\cite{McKinseyTarski1944}]
$\mathsf{S4}$ is sound and complete for all (finite) spaces with respect to the closure semantics.
\end{theorem}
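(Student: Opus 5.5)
By Proposition~\ref{prop:clequiv}, $\sem{\diam_c\varphi} = \cl(\sem{\varphi})$ and $\sem{\sq_c\varphi} = \Int(\sem{\varphi})$. So the halo semantics with $E(x)=\varnothing$ is literally the McKinsey--Tarski interior semantics. The plan is to transport the classical argument directly, in two halves.

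\textbf{Soundness.} I check that each axiom of $\mathsf{S4}$ is valid on every space and that the rules preserve validity.
\begin{itemize}
  \item Axiom~K follows from $\Int(A\cap B)=\Int A\cap\Int B$, or dually from $\cl(A\cup B)=\cl A\cup\cl B$.
  \item Axiom~T follows from $A\subseteq\cl A$. In halo terms this is Lemma~\ref{lem:halorefl}: $x\in\mu(x)$, so $x\in A$ gives $x\in\mu(x)\cap\str A$.
  \item Axiom~4 follows from idempotence $\cl\cl A=\cl A$.
  \item Modus Ponens preserves validity trivially.
  \item Necessitation holds because $\Int X = X$.
\end{itemize}
All of these are standard Kuratowski closure facts, so no nonstandard reasoning is needed beyond Proposition~\ref{prop:clequiv}.

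\textbf{Completeness.} I would use the route through finite spaces, which also covers the parenthetical ``(finite)'' in the statement. Suppose $\mathsf{S4}\nvdash\varphi$.
\begin{enumerate}
  \item By the finite model property of $\mathsf{S4}$ (filtration of the canonical model, as in \cite{BlackburnDeRijkeVenema2001}), there is a finite reflexive transitive Kripke frame $(W,R)$ and a valuation refuting $\varphi$ at some point.
  \item Equip $W$ with the Alexandroff topology whose opens are the $R$-upsets. Then $\cl(A)=R^{-1}[A]$, the set of points that see $A$, so the topological diamond coincides with the Kripke diamond on $(W,R)$.
  \item By Proposition~\ref{prop:clequiv}, this topological diamond is $\diam_c$. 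Hence the same valuation refutes $\varphi$ in the finite space $W$.
  \item Therefore any formula valid on all finite spaces, and a fortiori on all spaces, is a theorem of $\mathsf{S4}$.
\end{enumerate}

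For consistency with the paper's framework, step~2 can instead be obtained by combining Corollary~\ref{cor:stalexandroff} with Proposition~\ref{prop:decomp}. On a finite space, finite sets are fixed by $\str$, so $\str A\setminus X=\varnothing$ and hence $\sem{\lozengens\varphi}=\varnothing$. Proposition~\ref{prop:decomp} then gives $\diam_c=\diam_\mathrm{st}$, and Corollary~\ref{cor:stalexandroff} identifies $\diam_\mathrm{st}$ with the Kripke diamond over the specialisation preorder. This preorder is exactly $R$: for the upset topology, $x\in\cl\{y\}$ iff $xRy$.

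\textbf{Main obstacle.} The only nontrivial ingredient is the finite model property of $\mathsf{S4}$ via filtration, specifically choosing the transitive filtration so that the filtered relation stays reflexive and transitive. I would cite this rather than reprove it. Everything else is bookkeeping on the upset topology.
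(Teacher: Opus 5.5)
Your proof is correct. The paper gives no proof of its own here. It attributes the result to McKinsey and Tarski and uses Proposition~\ref{prop:clequiv} only to identify $\diam_c$ with topological closure.

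Your argument is the standard modern Kripke-style proof:
\begin{itemize}
\item soundness via the Kuratowski closure laws;
\item completeness via the finite model property of $\mathsf{S4}$ (transitive filtration);
\item the upset (Alexandroff) topology on a finite reflexive transitive frame.
\end{itemize}
This is what the parenthetical ``(finite)'' requires. Completeness for finite spaces gives completeness for all spaces a fortiori.

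The original McKinsey--Tarski route is algebraic. It combines finite closure algebras (McKinsey's finite model property) with an embedding of finite well-connected closure algebras into the closure algebra of any dense-in-itself separable metric space. That route proves more: a single space such as $\mathbb{R}$ already refutes every non-theorem. Your argument does not give this, but the statement as written does not ask for it.

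Your alternative derivation of step~2 stays inside the halo framework, and the paper does not make it explicit. On a finite space Proposition~\ref{thm:degenerate} kills $\lozengens$, so Proposition~\ref{prop:decomp} gives $\diam_c=\diam_\mathrm{st}$. Proposition~\ref{prop:stgeneral} then makes $\diam_\mathrm{st}$ the Kripke diamond of the specialisation preorder, which for the upset topology is $R$. You use only the semantic part of Corollary~\ref{cor:stalexandroff}, not its completeness claim, so nothing is circular.
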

Similarly, the operator $\diam_d$ is exactly the Cantor derivative operator.

\begin{proposition}
\label{prop:dequiv}
For every model $\mathcal{M}$ and formula $\varphi$:
\[
  \sem{\diam_d\varphi}^{\mathcal{M}} = d(\sem{\varphi}^{\mathcal{M}}),
\]
where $d(A) = \{x \in X : x \text{ is a limit point of } A\}$
is the Cantor derivative operator.
\end{proposition}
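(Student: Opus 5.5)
The plan is to unfold the semantic clause for $\diam_d$ and then apply Proposition~\ref{prop:nschar}(iii) with $A = \sem{\varphi}^{\mathcal{M}}$, exactly as Proposition~\ref{prop:clequiv} was obtained from part~(i). With $E(x)=\{x\}$, Definition~\ref{def:excl} gives
\[
  x \in \sem{\diam_d\varphi}^{\mathcal{M}} \iff \mu(x) \cap \bigl(\str\sem{\varphi}^{\mathcal{M}} \setminus \{x\}\bigr) \neq \emptyset .
\]
Proposition~\ref{prop:nschar}(iii) says that the right-hand side holds iff $x$ is a limit point of $\sem{\varphi}^{\mathcal{M}}$, i.e.\ iff $x \in d(\sem{\varphi}^{\mathcal{M}})$. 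Since this holds for every $x \in X$, the two sets are equal.

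To keep the argument self-contained and to make clear that no separation axiom is involved, I would also sketch why (iii) holds. For the forward direction, suppose $y \in \mu(x) \cap (\str A \setminus \{x\})$. Then for each $U \in \Nbd(x)$ we have $y \in \str U \cap \str A = \str(U\cap A)$ by Lemma~\ref{lem:star}(i), and $y \neq x$. So the internal statement $\exists z \in \str(U \cap A)\,(z \neq x)$ holds, and Transfer (using $\str x = x$) yields some $z \in U \cap A$ with $z \neq x$. Hence $x$ is a limit point of $A$.

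For the converse, suppose every $U \in \Nbd(x)$ meets $A \setminus \{x\}$. Then the family $\{\str(U \cap A) \setminus \{x\} : U \in \Nbd(x)\}$ consists of internal sets. It has the finite intersection property because $\Nbd(x)$ is closed under finite intersections and each member is nonempty. Its cardinality is at most $|V(S)|$, so Saturation gives a common element, which lies in $\mu(x) \cap (\str A \setminus \{x\})$.

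The only step needing care is this saturation argument: one must check that each set $\str(U\cap A)\setminus\{x\} = \str((U\cap A)\setminus\{x\})$ is internal, and that the finite intersection property reduces to the standard limit-point condition via Lemma~\ref{lem:star}. Since Proposition~\ref{prop:nschar} is already available, I expect the proof in the paper to be a single line citing part~(iii).
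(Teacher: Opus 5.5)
Your proposal is correct and matches the paper's proof, which is exactly the one-line appeal to Proposition~\ref{prop:nschar}(iii) with $A = \sem{\varphi}$ after unfolding the clause for $E(x)=\{x\}$. Your supplementary sketch of (iii) is also sound, with Transfer for one direction and Saturation over $\{\str(U\cap A)\setminus\{x\} : U \in \Nbd(x)\}$ for the other; it mirrors the saturation step the paper itself uses in the proof of Proposition~\ref{thm:wK4}.
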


\begin{proof}
This is immediate from Proposition~\ref{prop:nschar}(iii) with $A = \sem{\varphi}$.
\end{proof}

Soundness and completeness for the Cantor derivative semantics originate with Esakia~\cite{Esakia1981}. 

\begin{theorem}[{\cite{Esakia1981}}]
\label{thm:soundd}
\mbox{}
\begin{enumerate}[label=\textup{(\roman*)}]
  \item $\mathsf{wK4}$ is sound and complete for all (finite)  spaces with respect to the Cantor derivative semantics. 
  \item $\mathsf{K4}$ is sound and complete for all $T_D$
    spaces with respect to the Cantor derivative semantics.
\end{enumerate}
\end{theorem}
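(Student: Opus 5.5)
Since Proposition~\ref{prop:dequiv} identifies $\diam_d$ with the Cantor derivative $d$, the theorem is a statement purely about the derived-set operator. The plan is to prove soundness by direct topological verification and completeness by finite-frame methods. For part (i), soundness of $\mathsf{wK4}$, I would check (w4) in the dual form $d\,d(A) \subseteq A \cup d(A)$ on an arbitrary space. Let $x \in d(d(A))$ with $x \notin A$, and let $U$ be an open neighbourhood of $x$. Then $U$ contains some $y \neq x$ with $y \in d(A)$. Since $U$ is also a neighbourhood of $y$, it contains some $z \in A$ with $z \neq y$. Because $x \notin A$, we also have $z \neq x$, so $x \in d(A)$. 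Normality is routine: $d(\emptyset)=\emptyset$ and $d(A\cup B)=d(A)\cup d(B)$.

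For part (ii), I would strengthen this to axiom 4, i.e.\ $d\,d(A) \subseteq d(A)$, under $T_D$. Here I use that in a $T_D$ space each $\{x\}$ is open in its closure, so there is an open $V\ni x$ with $V \cap \cl\{x\} = \{x\}$. Given $x \in d\,d(A)$ and an open neighbourhood $U$ of $x$, I shrink to $U \cap V$. This yields $y \in U\cap V \cap d(A)$ with $y \neq x$, and then a point $z \in U \cap V \cap A \setminus\{y\}$. The only bad case is $z = x$. In that case $x$ lies in every neighbourhood of $y$, so $y \in \cl\{x\}\cap V=\{x\}$, which contradicts $y\neq x$. Hence $z \neq x$, and so $x \in d(A)$.

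For completeness, I would use the Kripke completeness of $\mathsf{wK4}$ with respect to finite weakly transitive frames, and of $\mathsf{K4}$ with respect to finite transitive frames. I would then turn a finite frame $(W,R)$ into a space whose derivative reproduces $R$. The key step is to replace each irreflexive point $w$ by a single point, and each reflexive point $w$ (or cluster) by a suitable set of points, and then take the Alexandroff (up-set) topology of the reflexive closure. In an Alexandroff space, $x \in d(A)$ iff some $y \neq x$ with $x \le y$ lies in $A$. So irreflexive points do not see themselves, while each reflexive point sees itself via a twin in its cluster, and the valuation is pulled back along the map sending points to $w$. For $\mathsf{wK4}$ no separation axiom is required, and the finite frame itself works after doubling reflexive points, which gives the ``finite spaces'' clause. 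For $\mathsf{K4}$ the resulting Alexandroff space must be $T_D$. Finite spaces are $T_D$ exactly when clusters are trivial, so reflexive points cannot be duplicated inside a finite cluster. Instead I would replace each reflexive point by an infinite set carrying a dense-in-itself $T_D$ (indeed $T_1$) topology, for instance a copy of the rationals or a cofinite topology, glued in the upset manner.

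The main obstacle is this $\mathsf{K4}$ construction: verifying both that the glued space is $T_D$ and that the interior map into the frame is a d-morphism. If direct gluing proves messy, I would first try the standard alternative of taking $R$ itself and forming the topology whose open sets are up-sets under $R$, adjusted so that reflexive points become non-isolated in their own closure through infinitely many copies. I would then check the truth lemma $\sem{\varphi} = f^{-1}(\sem{\varphi}_W)$ by induction on $\varphi$, where the modal step reduces to $d(f^{-1}(B)) = f^{-1}(R^{-1}[B])$.
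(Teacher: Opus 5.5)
\textbf{Comparison with the paper.} The paper does not prove this theorem. It is quoted from Esakia, and only the soundness of $\mathsf{w4}$ is re-derived (Proposition~\ref{thm:wK4}), by a halo argument. That argument is your neighbourhood argument run inside $\str X$: it produces a witness $z_U$ for each $U\in\Nbd(x)$ and then uses Saturation merely to repackage them as one point of $\mu(x)$. Your direct version is correct and shows that nothing nonstandard is needed.

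Your $T_D$ argument for axiom~4 is also right, but one step is misphrased. A single bad choice $z=x$ does not put $x$ in every neighbourhood of $y$. Argue instead as follows. If $(U\cap V\cap A)\setminus\{y\}\subseteq\{x\}$, apply $y\in d(A)$ to $W\cap U\cap V$ for an arbitrary open $W\ni y$. This forces $x\in W$, so $y\in V\cap\cl\{x\}=\{x\}$, a contradiction.

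Your $\mathsf{wK4}$ completeness via doubling reflexive points works. You should record two facts: doubling preserves weak transitivity, and the reflexive closure of a weakly transitive relation is transitive. Together these make it the specialisation preorder of the Alexandroff space.

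\textbf{The gap: $\mathsf{K4}$ completeness.} You leave this step unverified, and the literal recipe fails. You propose replacing each reflexive \emph{point} by its own infinite block, glued upward. This breaks as soon as a cluster contains two points $c_1\neq c_2$.
\begin{itemize}
\item If you glue upward inside the cluster, every open set meeting $f^{-1}(c_1)\cup f^{-1}(c_2)$ contains all of it. Then for distinct $x,y$ in that set, $y\in\cl\{x\}$ and $x\in\cl(\cl\{x\}\setminus\{x\})$, so $T_D$ fails.
\item If you do not glue inside the cluster, points of $f^{-1}(c_1)$ are not limit points of $f^{-1}(c_2)$, so $f$ is not a d-morphism.
\end{itemize}
Proper clusters cannot be avoided. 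Every finite transitive frame all of whose clusters are singletons validates $\sq(\sq(p\to\sq p)\to p)\to\sq p$, and a two-point reflexive cluster refutes it.

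\textbf{The fix.} Blow up each reflexive \emph{cluster} $C$ into one $T_1$ block in which every fibre is dense. For instance:
\begin{itemize}
\item take $X_C=C\times\mathbb{N}$ with the cofinite topology;
\item take $X_{\{w\}}=\{w\}$ for each irreflexive $w$ (in a transitive frame these are singleton clusters);
\item declare $U\subseteq X=\bigsqcup_C X_C$ open iff each $U\cap X_C$ is open in $X_C$, and $U\supseteq X_{C'}$ whenever $U$ meets $X_C$ and $C'$ lies strictly above $C$.
\end{itemize}
For $x\in X_C$, three facts hold:
\begin{itemize}
\item the set $U_x=X_C\cup\bigcup\{X_{C'}:C'\text{ strictly above }C\}$ is open;
\item every open neighbourhood of $x$ contains all strictly higher blocks and a cofinite part of $X_C$;
\item $\cl\{x\}=\{x\}\cup\bigcup\{X_{C'}:C'\text{ strictly below }C\}$.
\end{itemize}
The last fact gives $U_x\cap\cl\{x\}=\{x\}$, which is $T_D$.

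The neighbourhood facts give $d(f^{-1}(B))=f^{-1}(R^{-1}[B])$ for the projection $f$. In one direction, any $y\in U_x\setminus\{x\}$ lies in a higher block or, only when $C$ is reflexive, in $X_C$; either way $f(x)\,R\,f(y)$. In the other direction, each fibre $\{v\}\times\mathbb{N}$ is infinite, so it meets every cofinite part of $X_C$. The truth lemma then goes through as you planned.
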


The following halo-based proof of Axiom~$\mathsf{w4}$ on all topological spaces demonstrates the internal mechanisms of the halo semantics.

\begin{proposition}
\label{thm:wK4}
For every topological space and each model $\mathcal{M}$:
\[
  \sem{\diam_d\diam_d\varphi} \subseteq \sem{\diam_d\varphi} \cup \sem{\varphi}.
\]
\end{proposition}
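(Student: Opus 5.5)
We prove the contrapositive using halos. Write $A = \sem{\varphi}$ and suppose $x \in \sem{\diam_d\diam_d\varphi}$ with $x \notin A$; we show $x \in \sem{\diam_d\varphi}$. The plan is to move between the halo description and the neighbourhood description, using Proposition~\ref{prop:dequiv} and Proposition~\ref{prop:nschar}(ii),(iii), and to locate the hard step in producing a witness different from $x$.

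First I will unpack the hypothesis. By Proposition~\ref{prop:dequiv}, $x$ is a limit point of $d(A)$. So for every $U \in \Nbd(x)$ there is a standard $y \in U \cap d(A)$ with $y \neq x$. Halo form is less convenient here, since $\str d(A)$ contains nonstandard points, so I reason with standard neighbourhoods.

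Next, fix $U \in \Nbd(x)$ and such a $y$. Since $U$ is open and $y \in U$, Proposition~\ref{prop:nschar}(ii) gives $\mu(y) \subseteq \str U$. Because $y \in d(A)$, Proposition~\ref{prop:nschar}(iii) yields some $z \in \mu(y) \cap (\str A \setminus \{y\})$, and hence $z \in \str U \cap \str A = \str(U \cap A)$. Thus $U \cap A \neq \emptyset$ by Transfer, and I need a point of $U \cap A$ other than $x$. This is the main obstacle: $z$ (or its standard counterpart) might equal $x$. It is handled by the assumption $x \notin A$, which gives $x \notin \str A$ by Lemma~\ref{lem:star}(vi), so $z \neq x$. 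Then $z \in \str(U\cap A)\setminus\{x\}$, so the internal set $\str(U\cap A)\setminus\{x\} = \str\bigl((U \cap A)\setminus\{x\}\bigr)$ is nonempty. Transfer of $\exists w \in (U\cap A)\setminus\{x\}$ then gives a standard point of $U \cap A$ distinct from $x$.

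Since $U \in \Nbd(x)$ was arbitrary, $x$ is a limit point of $A$, so $x \in d(A) = \sem{\diam_d\varphi}$ by Proposition~\ref{prop:dequiv}. For a genuinely halo-style phrasing, I would instead apply Saturation to the family $\{\str(U\cap A)\setminus\{x\} : U \in \Nbd(x)\}$. It has the finite intersection property by the argument above together with closure of $\Nbd(x)$ under finite intersections. Saturation then gives a point in $\mu(x) \cap (\str A \setminus \{x\})$, and Proposition~\ref{prop:nschar}(iii) concludes.
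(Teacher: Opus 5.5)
Your proof is correct, but its main line takes a different route from the paper's.

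The paper stays inside the halo of $x$. It applies Proposition~\ref{prop:nschar}(iii) to $d(A)$ to obtain a single, possibly nonstandard, point $y \in \mu(x) \cap (\str d(A)\setminus\{x\})$. This is precisely the object you set aside as inconvenient. The paper handles it by transferring the bounded definition of $d(A)$, so that every $V \in \str\tau$ containing $y$ meets $\str A \setminus \{y\}$. Taking $V = \str U$ for each $U \in \Nbd(x)$ gives $z_U \in \str(U \cap A)\setminus\{x\}$, using $x \notin A$ exactly as you do. Saturation then yields one witness in $\mu(x) \cap (\str A\setminus\{x\})$.

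You instead choose a standard $y_U$ separately for each neighbourhood, pass into the halo of that standard point, and return to a standard witness by Transfer. The Saturation step in your closing remark is therefore optional for you, whereas it is essential in the paper's version.

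Your version is more elementary, but its nonstandard steps are a round trip. Its entry step, the forward direction of Proposition~\ref{prop:nschar}(iii), itself rests on Saturation. Yet $U$ is already an open neighbourhood of $y_U \in d(A)$, so it contains a point of $A\setminus\{y_U\}$ directly, and that point differs from $x$ because $x \notin A$. What you have is thus the classical proof of $\mathsf{w4}$ written in halo notation. The paper's argument, though heavier, genuinely exercises Transfer at a nonstandard point together with Saturation. That is why the paper includes it as a demonstration of ``the internal mechanisms of the halo semantics''.
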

\begin{proof}
Let $A = \sem{\varphi}$ and suppose $x \in d(d(A))$.
By Proposition~\ref{prop:nschar}(iii), there exists $y \in \mu(x) \cap (\str d(A) \setminus \{x\})$.
Since $y \in \str d(A)$, from Transfer of the bounded formula defining $d(A)$, we obtain that for every $V \in \str\tau$, if $y \in V$ then $V \cap (\str A \setminus \{y\}) \neq \emptyset$.
Since $y \in \mu(x)$, we have $y \in \str U$ for every $U \in \Nbd(x)$.
Each $\str U$ belongs to $\str\tau$ by Transfer, so the transferred condition yields a point $z_U \in \str U \cap (\str A \setminus \{y\})$ for each $U \in \Nbd(x)$.

If $x \in \str A$ then $x \in A$ by Lemma~\ref{lem:star}(vi), so $x \in \sem{\varphi}$ and we are done.
Otherwise $x \notin \str A$.
Then $z_U \neq x$ for every $U$, since $z_U \in \str A$.
Since also $z_U \neq y$, we obtain $z_U \in \str(U \cap A) \setminus \{x\}$ for each $U \in \Nbd(x)$.
Note that each set $\str(U \cap A) \setminus \{x\}$ is internal. 
For any $U_1, \ldots, U_n \in \Nbd(x)$, setting $U = U_1 \cap \cdots \cap U_n \in \Nbd(x)$ implies $z_U \in \str(U \cap A) \setminus \{x\} \subseteq \str(U_i \cap A) \setminus \{x\}$ for each $i$, so the family has the finite intersection property.
By Saturation, there exists $w \in \bigcap_{U \in \Nbd(x)} (\str(U \cap A) \setminus \{x\})$.
Then $w \in \str U$ for every $U \in \Nbd(x)$, so $w \in \mu(x)$; and $w \in \str A$ with $w \neq x$, so $w \in \mu(x) \cap (\str A \setminus \{x\})$ and $x \in d(A)$.
\end{proof}

For $\lozengens$, the stronger inclusion 
$\omega(\omega(A)) \subseteq \omega(A)$ holds outright 
(Lemma~\ref{thm:nsK4}), giving the full Axiom~4 without 
the disjunct $\sem{\varphi}$.
Strengthening w4 to the full Axiom~4 for $\diam_d$ requires 
$T_D$~\cite{Aull1962,BEG2005}.
In the next section, we show that this is not the case for 
the nonstandard semantics.

\section{The nonstandard operator}
\label{sec:ns-op}


Recall the definition of the nonstandard operator:
\[
  x \models \lozengens\varphi
  \iff
  \mu(x) \cap (\str\sem{\varphi} \setminus X) \neq \emptyset.
\]
We show some basic validities of $\lozengens$. 
\begin{proposition}
\label{prop:nsbasic}
For every topological model $\mathcal{M}$ the following properties hold:
\begin{enumerate}[label=\textup{(\roman*)}]
  \item $\models \neg\lozengens\bot$;
  \item $\models \lozengens(\varphi \vee \psi)
    \leftrightarrow (\lozengens\varphi \vee \lozengens\psi)$;
  \item if $\models \varphi \to \psi$ then
    $\models \lozengens\varphi \to \lozengens\psi$;
  \item $\models \lozengens\varphi \to \diam_c\varphi$;
  \item $\models \lozengens\varphi \to \diam_d\varphi$.
\end{enumerate}
\end{proposition}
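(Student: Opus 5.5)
All five items are direct unfoldings of the semantic clause $x \models \lozengens\varphi \iff \mu(x)\cap(\str\sem{\varphi}\setminus X)\neq\emptyset$. The only tools needed are the Boolean properties of the star map in Lemma~\ref{lem:star} and the inclusions already recorded in Proposition~\ref{prop:order}. I would treat each item in turn, working pointwise at an arbitrary $x \in X$ in an arbitrary model $\mathcal{M}$.

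For (i), note that $\sem{\bot}=\emptyset$ and $\str\emptyset=\emptyset$ by Lemma~\ref{lem:star}(v). Hence $\mu(x)\cap(\str\emptyset\setminus X)=\emptyset$ for every $x$, and $x\models\neg\lozengens\bot$.

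For (ii), I use $\sem{\varphi\vee\psi}=\sem{\varphi}\cup\sem{\psi}$ together with Lemma~\ref{lem:star}(ii), which gives $\str(\sem{\varphi}\cup\sem{\psi})=\str\sem{\varphi}\cup\str\sem{\psi}$. Set difference with $X$ and intersection with $\mu(x)$ both distribute over unions, so
\[
\mu(x)\cap(\str\sem{\varphi\vee\psi}\setminus X)
=\bigl(\mu(x)\cap(\str\sem{\varphi}\setminus X)\bigr)\cup\bigl(\mu(x)\cap(\str\sem{\psi}\setminus X)\bigr).
\]
The left side is nonempty iff one of the two pieces on the right is nonempty, which is exactly the biconditional.

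For (iii), assume $\models\varphi\to\psi$, so $\sem{\varphi}\subseteq\sem{\psi}$. Monotonicity, Lemma~\ref{lem:star}(iv), gives $\str\sem{\varphi}\subseteq\str\sem{\psi}$. Any witness in $\mu(x)\cap(\str\sem{\varphi}\setminus X)$ is therefore also a witness for $\psi$.

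Items (iv) and (v) are precisely the inclusions $\sem{\lozengens\varphi}\subseteq\sem{\diam_d\varphi}\subseteq\sem{\diam_c\varphi}$ of Proposition~\ref{prop:order}. They rest on $\str A\setminus X\subseteq\str A\setminus\{x\}\subseteq\str A$, which holds because $x\in X$.

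None of the steps is a real obstacle. The only point needing care is (ii): I must use that $\str$ commutes with finite unions, which is Transfer via Lemma~\ref{lem:star}, and not merely monotonicity. Monotonicity alone would give only the right-to-left direction.
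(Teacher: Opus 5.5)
Your proposal is correct and matches the paper's proof: each item reduces to an elementary set fact about $\str\sem{\varphi}\setminus X$, namely $\str\emptyset=\emptyset$, $\str$ commuting with binary unions, monotonicity of $\str$, and $\str A\setminus X\subseteq\str A\setminus\{x\}\subseteq\str A$. Deriving (iv) and (v) from Proposition~\ref{prop:order} is fine, since that proposition rests on exactly those inclusions.
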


\begin{proof}
(i) follows from $\str \emptyset \setminus X = \emptyset$;
(ii) follows from $(\str(\sem{\varphi} \cup \sem{\psi})) \setminus X
= (\str \sem{\varphi} \cup \str \sem{\psi}) \setminus X
= (\str \sem{\varphi} \setminus X) \cup (\str \sem{\psi} \setminus X)$;
(iii) follows since $\sem{\varphi} \subseteq \sem{\psi}$ entails $\str \sem{\varphi}
\subseteq \str \sem{\psi}$ and so $\str \sem{\varphi} \setminus X
\subseteq \str \sem{\psi} \setminus X$;
(iv) follows from $\str \sem{\varphi} \setminus X \subseteq \str \sem{\varphi}$; and
(v) follows from $\str \sem{\varphi} \setminus X \subseteq \str \sem{\varphi} \setminus \{x\}$.
\end{proof}

\begin{proposition}
\label{prop:nsrefl}
Axiom~T is not valid for $\lozengens$, even on $\mathbb{R}$.
\end{proposition}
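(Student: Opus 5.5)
Axiom~T in diamond form reads $\varphi \to \lozengens\varphi$, so it suffices to give a model on $\mathbb{R}$ and a point $x$ with $x \models p$ but $x \not\models \lozengens p$. I take the usual topology on $\mathbb{R}$, $\nu(p) = \{0\}$ and $x = 0$. Then $0 \models p$ trivially, and it remains to show $\mu(0) \cap (\str\{0\} \setminus \mathbb{R}) = \emptyset$.

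The key step is computing $\str\{0\}$. Transfer applied to the bounded formula $\forall z \in \mathbb{R}\,(z \in \{0\} \leftrightarrow z = 0)$ gives $\str\{0\} = \{0\}$; this is the same fact used in Examples~\ref{ex:R} and~\ref{ex:sierpinski}, where finite sets are fixed by $\str$. Hence $\str\sem{p} \setminus \mathbb{R} = \{0\} \setminus \mathbb{R} = \emptyset$. The intersection with $\mu(0)$ is therefore empty, so $0 \not\models \lozengens p$, while $0 \models p$. This refutes $p \to \lozengens p$, which is equivalent to the box form $\sq_{\mathrm{ns}} p \to p$ with $\neg p$ substituted.

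I could alternatively use the isolated point $2$ in Example~\ref{ex:R}: there $2 \models p$ but $2 \notin \sem{\lozengens p}$, which already gives the counterexample. I will mention this as a remark.

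There is no real obstacle. The only point needing care is justifying $\str\{0\} = \{0\}$ by Transfer, i.e.\ checking that no nonstandard witness can exist at all. Note that this argument shows more: on any space, a proposition with finite truth set never satisfies $\lozengens$. So T fails wherever the space is nonempty, and in particular on $\mathbb{R}$.
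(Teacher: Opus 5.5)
Your proof is correct and is essentially the paper's own argument: the same model $\nu(p)=\{0\}$ on $\mathbb{R}$, with Transfer giving $\str\{0\}=\{0\}$, so that $\str\sem{p}\setminus\mathbb{R}=\emptyset$ and hence $0\models p$ but $0\not\models\lozengens p$. Your extra remarks are also correct: the isolated point $2$ from Example~\ref{ex:R} is another counterexample, and taking a singleton valuation refutes T on every nonempty space.
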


\begin{proof}
$\nu(p) = \{0\}$. 
By Transfer $\str\{0\} = \{0\}$, so $\str\{0\} \setminus \mathbb{R} = \emptyset$.
It follows that $0 \models p$ but $0 \not\models \lozengens p$.
\end{proof}

\subsection{The \texorpdfstring{$\omega$}{omega}-Accumulation Characterisation}

We show that $\lozengens$ has a classical topological characterisation analogous to those of $\diam_c$ and $\diam_d$.
Recall that a point $x \in X$ is an \emph{$\omega$-accumulation point} of $S \subseteq X$ if $|U \cap S| \geq \aleph_0$ for every open set $U$ containing $x$.
We denote the set of $\omega$-accumulation points of $S$ by $\omega(S)$.
This notion appears classically in connection with perfect sets and the Cantor--Bendixson theorem~\cite{Kechris1995}.

Before providing the full characterisation, we need to show the following.
\begin{proposition}\label{keyprop}
    Given $S \subseteq X$, the set $\str S \setminus X$ is non-empty if, and only if, $S$ is infinite.
\end{proposition}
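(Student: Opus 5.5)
The two directions are handled separately: finite $S$ by Transfer, infinite $S$ by Saturation.

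For the forward direction we argue by contraposition. Suppose $S = \{s_1,\ldots,s_n\}$ is finite. The bounded formula $\forall z \in S\,(z = s_1 \vee \cdots \vee z = s_n)$ holds in $V(S)$. Transfer, together with Extension ($\str s_i = s_i$), gives $\forall z \in \str S\,(z = s_1 \vee \cdots \vee z = s_n)$, so $\str S \subseteq S \subseteq X$. Conversely $S \subseteq \str S$ by Lemma~\ref{lem:star}(vi). Hence $\str S = S$ and $\str S \setminus X = \emptyset$. This is the fact already used informally in Example~\ref{ex:sierpinski}, that finite sets are fixed by $\str$.

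For the reverse direction, suppose $S$ is infinite and consider the family
\[
\mathcal{F} = \{\str S \setminus \{a\} : a \in X\}.
\]
Each member is internal, since $\str S\setminus\{a\} = \str(S\setminus\{a\})$ by Lemma~\ref{lem:star} together with $\str\{a\}=\{a\}$. The cardinality of $\mathcal{F}$ is at most $|X| \le |V(S)| \le \kappa$, so the Saturation Principle applies to it.

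The family has the finite intersection property. Given $a_1,\ldots,a_n \in X$, the set $S\setminus\{a_1,\ldots,a_n\}$ is nonempty because $S$ is infinite. Any element of it lies in $\str S\setminus\{a_i\}$ for every $i$, using Lemma~\ref{lem:star}(vi). Saturation therefore yields a point
\[
w \in \bigcap_{a\in X}(\str S\setminus\{a\}) = \str S\setminus X,
\]
so $\str S \setminus X$ is nonempty.

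The main point requiring care is the legitimacy of applying Saturation. Each set $\str S\setminus\{a\}$ must be internal, which holds because it equals $\str(S\setminus\{a\})$, even though the intersection $\str S\setminus X$ is external. The index set must also be bounded by $\kappa$. Both conditions follow directly from the framework of Definition~\ref{def:nsmodel}. Alternatively, one could index the family by finite $G\subseteq X$ with members $\str(S\setminus G)$, as in the family mentioned after Definition~\ref{def:nsmodel}; the argument is unchanged.
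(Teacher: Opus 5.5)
Your proof is correct and follows the paper's route: Transfer for finite $S$, and Saturation on internal sets of the form $\str(S\setminus F)$ with $F$ finite for infinite $S$. Your enumeration formula $\forall z \in S\,(z = s_1 \vee \cdots \vee z = s_n)$ is a cleaner rendering of the paper's transferred bijection, and removing every standard point $a \in X$ directly (rather than finite $F \subseteq S$, as the paper does) means you skip the paper's final appeal to Lemma~\ref{lem:star}(vi) to conclude that the witness is nonstandard.
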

\begin{proof}
 $(\Rightarrow)$ If $S$ is finite, say $|S|=n$, then there exists a bijection $f \colon \{1,\ldots,n\} \xrightarrow{\sim} S$.
By Transfer, there exists a bijection $f \colon \{1,\ldots,n\} \xrightarrow{\sim} \str S$, so $|\str S|=n$. 
Hence $\str S$ consists exactly of the standard images of the elements of $S$ by the Extension Principle, and therefore $\str S = S \subseteq X$, and so $\str S \setminus X = \emptyset$.

$(\Leftarrow)$ Suppose $S$ is infinite. Consider the family
\[
  \mathcal{G} = \bigl\{\str(S \setminus F) : \text{finite } F \subseteq S\bigr\}.
\]
Each member of this family is internal and non-empty.  
For any finite $F_1, F_2$: $\str(S\setminus(F_1\cup F_2)) \subseteq \str(S\setminus F_1) \cap \str(S\setminus F_2)$ and $S\setminus(F_1\cup F_2)$ is non-empty, so
$\mathcal{G}$ has the finite intersection property. 
By Saturation, there exists $y \in \bigcap\mathcal{G}$. 
Then $y \in \str S$, and for every $s \in S$, taking $F = \{s\}$ implies $y \in \str(S \setminus \{s\})$,
so $y \neq s$. Since every standard element of $\str S$ is of the form $s \in S$ (Lemma~\ref{lem:star}(vi)), $y$ must be nonstandard, i.e.\
$y \notin X$.
\end{proof}
\begin{theorem}[$\omega$-accumulation characterisation]
\label{thm:omegaacc}
For every topological model $\mathcal{M}$ and formula $\varphi$:
\[
  x \models \lozengens\varphi
  \iff
 x\in \omega(\sem{\varphi}) 
\]

\end{theorem}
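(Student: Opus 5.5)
Write $A = \sem{\varphi}$. The claim is then that $\mu(x) \cap (\str A \setminus X) \neq \emptyset$ holds iff $|U \cap A| \geq \aleph_0$ for every $U \in \Nbd(x)$. Both directions will reduce to Proposition~\ref{keyprop}. For the backward direction we add a Saturation argument, modelled on the proof of Proposition~\ref{thm:wK4}.

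For $(\Rightarrow)$, suppose $y \in \mu(x) \cap (\str A \setminus X)$ and let $U \in \Nbd(x)$. Since $y \in \mu(x)$ we have $y \in \str U$, and so $y \in \str U \cap \str A = \str(U \cap A)$ by Lemma~\ref{lem:star}(i). Also $y \notin X$, so $\str(U \cap A) \setminus X \neq \emptyset$. Proposition~\ref{keyprop} with $S = U \cap A$ then gives that $U \cap A$ is infinite. As $U$ was arbitrary, $x \in \omega(A)$.

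For $(\Leftarrow)$, suppose every $U \cap A$ with $U \in \Nbd(x)$ is infinite. Consider the family
\[
  \mathcal{F} = \bigl\{\str\bigl((U \cap A) \setminus F\bigr) : U \in \Nbd(x),\ F \subseteq X \text{ finite}\bigr\}.
\]
\begin{itemize}
  \item \emph{Each member is internal and non-empty.} Removing finitely many points from the infinite set $U \cap A$ leaves a non-empty set, and its star is non-empty by Transfer.
  \item \emph{The family has the finite intersection property.} Given $(U_1,F_1),\ldots,(U_n,F_n)$, put $U = \bigcap_i U_i \in \Nbd(x)$ and $F = \bigcup_i F_i$. By monotonicity (Lemma~\ref{lem:star}(iv)), $\str((U\cap A)\setminus F)$ is contained in every $\str((U_i \cap A)\setminus F_i)$, and it is non-empty since $U \cap A$ is infinite.
  \item \emph{Its cardinality is small enough.} $\mathcal{F}$ is indexed by a subset of $\tau \times \mathcal{P}(X)$, so $|\mathcal{F}| \leq |V(S)| \leq \kappa$.
\end{itemize}
Saturation therefore yields a point $w \in \bigcap \mathcal{F}$. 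We check that $w$ is the required witness.
\begin{itemize}
  \item $w \in \str U$ for every $U \in \Nbd(x)$, so $w \in \mu(x)$.
  \item $w \in \str A$.
  \item For each standard $s \in X$, taking $F=\{s\}$ gives $w \in \str(X \setminus \{s\}) = \str X \setminus \{s\}$ by Lemma~\ref{lem:star}(iii) and the fact that $\str\{s\} = \{s\}$. Hence $w \neq s$.
\end{itemize}
So $w \notin X$, and $w \in \mu(x) \cap (\str A \setminus X)$, i.e.\ $x \models \lozengens\varphi$.

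The main obstacle is the backward direction. Applying Proposition~\ref{keyprop} to each $U \cap A$ separately only produces a nonstandard point of $\str(U\cap A)$ for each $U$, with no single point lying in all of them. Those points must be glued into one common point of the halo, and it must be kept nonstandard at the same moment. Building the finite exclusions $F$ into the Saturation family does both at once, and this is where the cardinality bound on the family has to be checked.
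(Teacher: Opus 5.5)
Your proposal is correct and follows essentially the same route as the paper: the forward direction applies Proposition~\ref{keyprop} to $U \cap A$, and the backward direction uses Saturation on a family indexed by pairs $(U,F)$ with $U \in \Nbd(x)$ and $F \subseteq X$ finite, reading off $w \in \mu(x)$ from $F=\emptyset$ and $w \notin X$ from the singletons $F=\{s\}$. The differences are cosmetic: you write $\str((U\cap A)\setminus F)$ where the paper writes $\str(U\cap A)\setminus F$, and these are equal since finite sets are fixed by $\str$; you also check the cardinality bound $|\mathcal{F}|\le\kappa$ explicitly, which the paper leaves implicit.
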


\begin{proof}
Set $A := \sem{\varphi}$.

$(\Rightarrow)$ Suppose $\mu(x) \cap (\str A \setminus X) \neq \emptyset$.
Consider an arbitrary open $U \in \mathcal{N}(x)$.
Then $\str U \supseteq \mu(x)$, so $\str(U \cap A) \setminus X \neq \emptyset$. 
By Proposition~\ref{keyprop}, $U \cap A$ is infinite.

$(\Leftarrow)$ Suppose every $U \in \mathcal{N}(x)$ satisfies $|U \cap A|\geq \aleph_0$.
Consider the following family indexed by all pairs $(U, G)$ with $U \in \mathcal{N}(x)$
and finite $G \subseteq X$:
\[
  \mathcal{F}
  = \bigl\{{}^*(U \cap A) \setminus G :
     U \in \mathcal{N}(x),\, \text{finite } G \subseteq X\bigr\}.
\]
Each member is internal and non-empty. 
Moreover, the finite intersection property holds: given finitely many pairs
$(U_1, G_1), \ldots, (U_n, G_n)$, set $U = U_1 \cap \cdots \cap U_n
\in \mathcal{N}(x)$ and $G = G_1 \cup \cdots \cup G_n$; then
${}^*(U \cap A) \setminus G \subseteq {}^*(U_i \cap A) \setminus G_i$
for each $i$, and this set is non-empty.
By Saturation there exists $y \in \bigcap \mathcal{F}$.

Let $G = \emptyset$; then $y \in \str(U \cap A) \subseteq \str U$ for each $U \in \mathcal{N}(x)$, so $y \in \mu(x)$ and also $y \in \str A$.
Take an arbitrary $a \in X$ and set $G = \{a\}$; then $y \neq a$.
Since $a \in X$ was arbitrary, $y \notin X$.
Thus $y \in \mu(x) \cap (\str A \setminus X)$.
\end{proof}

\subsection{Some properties of the nonstandard semantics}
We show some properties of the nonstandard topological semantics.
We begin by completing the verification that $\lozengens$ defines a normal modal logic.
\begin{lemma}
\label{lem:nsK}
Axiom $K$ is valid for $\lozengens$ on all topological spaces.
\end{lemma}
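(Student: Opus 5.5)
We establish Axiom~K in its equivalent diamond form, namely $\sq_{\mathrm{ns}}(\varphi\to\psi)\wedge\lozengens\varphi\to\lozengens\psi$, where $\sq_{\mathrm{ns}}$ is the De Morgan dual of $\lozengens$. Fix a model and a point $x$ at which $\sq_{\mathrm{ns}}(\varphi\to\psi)$ and $\lozengens\varphi$ both hold. Put $A=\sem{\varphi}$ and $B=\sem{\psi}$.

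First we unfold the box. Since $\sem{\neg(\varphi\to\psi)}=A\setminus B$, the hypothesis $x\models\sq_{\mathrm{ns}}(\varphi\to\psi)$ says exactly that $x\notin\sem{\lozengens(\varphi\wedge\neg\psi)}$. By Definition~\ref{def:excl}, this means
\[
\mu(x)\cap\bigl(\str(A\setminus B)\setminus X\bigr)=\emptyset .
\]

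Next we split $A$ as $A=(A\cap B)\cup(A\setminus B)$. Lemma~\ref{lem:star}(i)--(iii) gives $\str A=\str(A\cap B)\cup\str(A\setminus B)$, and therefore
\[
\str A\setminus X=\bigl(\str(A\cap B)\setminus X\bigr)\cup\bigl(\str(A\setminus B)\setminus X\bigr).
\]
This is the same distributivity used in Proposition~\ref{prop:nsbasic}(ii).

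Now take a witness $y\in\mu(x)\cap(\str A\setminus X)$, which exists because $x\models\lozengens\varphi$. By the displayed identity, $y$ lies in $\str(A\cap B)\setminus X$ or in $\str(A\setminus B)\setminus X$. The second option is ruled out by the unfolded box condition. Hence $y\in\str(A\cap B)\setminus X$. By monotonicity, Lemma~\ref{lem:star}(iv), this gives $y\in\str B\setminus X$. So $y\in\mu(x)\cap(\str B\setminus X)$, that is, $x\models\lozengens\psi$.

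Alternatively, the result follows from Theorem~\ref{thm:omegaacc}. Suppose $x\in\omega(A)$ and $x\notin\omega(A\setminus B)$. Then some open neighbourhood $U_0$ of $x$ meets $A\setminus B$ in only finitely many points. For every $U\in\Nbd(x)$, the set $U\cap U_0\cap A$ is infinite, so $U\cap B$ is infinite. Hence $x\in\omega(B)$.

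No step is a genuine obstacle. The only point needing care is the box unfolding: we must identify $\sem{\neg(\varphi\to\psi)}$ with $A\setminus B$, and use the fact that the nonstandard extension commutes with set difference. Both are covered by Lemma~\ref{lem:star}, so the argument is routine.
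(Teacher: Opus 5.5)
Your proof is correct and is essentially the paper's argument: the paper derives K from additivity and monotonicity of $\lozengens$ (Proposition~\ref{prop:nsbasic}(ii) and (iii)). You use the same two facts at the level of witnesses, splitting $\str A$ into $\str(A\cap B)$ and $\str(A\setminus B)$ and then applying monotonicity of $\str$. Your alternative argument via Theorem~\ref{thm:omegaacc} is also valid.
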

\begin{proof}
This follows from Proposition~\ref{prop:nsbasic}(ii) and (iii).
\end{proof}
We show that $\lozengens$ coincides with $\lozenge_d$ on $T_1$ spaces; 
the proof illustrates how the halo formulation reduces to the classical characterisation.
\begin{lemma}
\label{thm:coincide}
If $(X,\tau)$ is a $T_1$ topological space then for every model $\mathcal{M}$
and formula $\varphi$:
\[
  \sem{\lozengens\varphi} = \sem{\diam_d\varphi}.
\]
\end{lemma}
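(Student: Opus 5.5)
The plan is to prove the two inclusions separately. The inclusion $\sem{\lozengens\varphi} \subseteq \sem{\diam_d\varphi}$ is already Proposition~\ref{prop:nsbasic}(v) and needs no separation axiom. For the converse, set $A := \sem{\varphi}$ and fix $x \in \sem{\diam_d\varphi}$. By Proposition~\ref{prop:dequiv} and Proposition~\ref{prop:nschar}(iii) this gives a witness $y \in \mu(x) \cap (\str A \setminus \{x\})$. The goal is to show that $y \notin X$, so that $y$ is already a witness for $\lozengens$.

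This is where $T_1$ enters, in the halo form of Proposition~\ref{prop:nschar}(v): $\mu(x) \cap X = \{x\}$. Since $y \in \mu(x)$ and $y \neq x$, we get $y \notin X$. Hence $y \in \mu(x) \cap (\str A \setminus X)$, and so $x \models \lozengens\varphi$.

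Alternatively, the same conclusion can be reached through Theorem~\ref{thm:omegaacc}, by checking classically that in a $T_1$ space every limit point of $A$ is an $\omega$-accumulation point. Suppose some $U \in \Nbd(x)$ had $U \cap A \setminus\{x\}$ finite, say equal to $F$. Then $U \setminus F$ is open, since finite sets are closed in a $T_1$ space. It contains $x$ and meets $A$ at most in $x$, which contradicts $x$ being a limit point. I would present the halo argument as the main proof and mention this one as a sanity check.

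There is no real obstacle here. The only point needing care is to invoke Proposition~\ref{prop:nschar}(v) in the correct direction ($T_1$ implies $\mu(x)\cap X=\{x\}$). One should also note that Lemma~\ref{lem:star}(vi) is not needed, since the argument only uses the standardness of $y$ and never membership in $A$.
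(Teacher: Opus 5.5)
Your proposal is correct and is essentially the paper's proof, which likewise gives both arguments: the classical one showing that every limit point is an $\omega$-accumulation point in a $T_1$ space (by separating $x$ from the finitely many points of $U\cap A\setminus\{x\}$), and the halo one using $\mu(x)\cap X=\{x\}$ to identify $\mu(x)\cap(\str A\setminus X)$ with $\mu(x)\cap(\str A\setminus\{x\})$. The only difference is emphasis: the paper leads with the classical argument, while you make the halo argument primary (and for that argument the $\diam_d$ witness comes straight from the definition of $\diam_E$ with $E(x)=\{x\}$, so Propositions~\ref{prop:dequiv} and~\ref{prop:nschar}(iii) need not be cited).
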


\begin{proof}
Since $\omega(A) \subseteq d(A)$, it
suffices to show $d(A) \subseteq \omega(A)$ on
$T_1$ spaces.
Suppose $x \in d(A)$ and $U \in \mathcal{N}(x)$. Suppose for
contradiction that $U \cap A$ is finite: $U \cap A = \{a_1, \ldots, a_k\}
\cup (\{x\} \cap A)$ with $a_1, \ldots, a_k \in A \setminus \{x\}$.
Since $(X,\tau)$ is $T_1$, for each $i$ there exists $W_i \ni x$ open with
$a_i \notin W_i$. Let $W = U \cap W_1 \cap \cdots \cap W_k$. Then
$W \in \mathcal{N}(x)$ and $W \cap (A \setminus \{x\}) = \emptyset$,
contradicting $x \in d(A)$. Hence $U \cap A$ is infinite.

Moreover, Proposition~\ref{prop:nschar}(v) implies $\mu(x) \cap X = \{x\}$
on $T_1$ spaces, so for $y \in \mu(x)$: $y \notin X \iff y \neq x$.
Therefore $\mu(x) \cap (\str A \setminus X) = \mu(x) \cap
(\str A \setminus \{x\})$, which is exactly the equality of the
$\lozengens$ and $\diam_d$ witness conditions.
\end{proof}

\begin{corollary}
\label{cor:T1axioms}
On $T_1$ spaces, $\lozengens$ satisfies Axiom~4.
\end{corollary}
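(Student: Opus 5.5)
The plan is to reduce the claim to the known behaviour of the Cantor derivative, using Lemma~\ref{thm:coincide}. By that lemma, on a $T_1$ space we have $\sem{\lozengens\psi} = \sem{\diam_d\psi} = d(\sem{\psi})$ for every formula $\psi$. Axiom~4 in its diamond form reads $\lozengens\lozengens\varphi \to \lozengens\varphi$. Semantically, therefore, it suffices to show $d(d(A)) \subseteq d(A)$ for every $A \subseteq X$ in a $T_1$ space. We apply the lemma twice: first with $\psi = \varphi$, then with $\psi = \lozengens\varphi$, noting that $\sem{\lozengens\varphi} = d(A)$ with $A = \sem{\varphi}$.

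For the inclusion $d(d(A)) \subseteq d(A)$, we start from Proposition~\ref{thm:wK4}, which gives $d(d(A)) \subseteq d(A) \cup A$. It then remains to rule out a point $x \in d(d(A)) \cap (A \setminus d(A))$.

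For such an $x$ there is an open $U \ni x$ with $U \cap A \subseteq \{x\}$. Since $x \in d(d(A))$, the set $U$ contains some $y \neq x$ with $y \in d(A)$. By $T_1$, pick an open $W \ni y$ with $x \notin W$. Then $U \cap W$ is an open neighbourhood of $y$, and $(U \cap W) \cap A \subseteq \{x\} \setminus W = \emptyset$, contradicting $y \in d(A)$.

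Alternatively, one could argue directly via $\omega$: $T_1$ spaces are $T_D$, so Theorem~\ref{thm:soundd}(ii) already yields Axiom~4 for $\diam_d$, and this transfers through the coincidence lemma. I would present the elementary argument above and mention the $T_D$ route as a remark. The only delicate step is the exclusion of the reflexive disjunct from w4, which is exactly where $T_1$ (separating $x$ from $y$) is used; the rest is bookkeeping with the two applications of Lemma~\ref{thm:coincide}.
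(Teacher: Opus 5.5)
Your argument is correct, but its main line differs from the paper's. The paper's proof is the route you relegate to a remark: Lemma~\ref{thm:coincide} gives $\lozengens=\diam_d$ on $T_1$ spaces, $T_1$ implies $T_D$, and Esakia's Theorem~\ref{thm:soundd}(ii) then supplies Axiom~4 for $\diam_d$.

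You instead prove $d(d(A))\subseteq d(A)$ by hand. You start from the halo-based w4 inclusion of Proposition~\ref{thm:wK4} and use $T_1$ once to eliminate points of $A\setminus d(A)$. The paper's version is a one-liner that hands the topology off to a cited theorem. Yours is self-contained and shows exactly where separation enters.

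In fact your argument only needs $T_D$. If $U_x\ni x$ is open with $U_x\setminus\{x\}$ open, shrink $U$ to $U\cap U_x$ and take $W=U_x\setminus\{x\}$. So you are essentially reproving the soundness half of Theorem~\ref{thm:soundd}(ii).

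Two small slips:
\begin{enumerate}
\item In the final step you want $(U\cap W)\cap A\subseteq\{x\}\cap W=\emptyset$, not $\{x\}\setminus W$. Since $x\notin W$, the set $\{x\}\setminus W$ is $\{x\}$, not $\emptyset$.
\item The alternative you describe is not an argument via $\omega$ at all. A genuinely $\omega$-based route is Lemma~\ref{thm:nsK4}, which proves $\omega(\omega(A))\subseteq\omega(A)$ on every space directly from the definition. That makes both the $T_1$ hypothesis and the detour through $d$ unnecessary.
\end{enumerate}
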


\begin{proof}
On $T_1$ spaces $\lozengens = \diam_d$ by Lemma~\ref{thm:coincide};
since $T_1$ implies $T_D$,  Axiom~4 holds by 
Theorem~\ref{thm:soundd}\mbox{(ii).\qedhere}
\end{proof}
While our study is centred around topological semantics for arbitrary topological spaces, it is worth noting the relationship of $\lozengens$ to both finite topological spaces and Kripke frames.
\begin{proposition}
\label{thm:degenerate}
Given a finite topological space $(X,\tau)$,
$\sem{\lozengens\varphi} = \emptyset$ for every model $\mathcal{M}$ and formula $\varphi$.
\end{proposition}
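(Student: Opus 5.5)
The plan is a direct argument via Proposition~\ref{keyprop}, with Theorem~\ref{thm:omegaacc} as a cross-check. Fix a finite space $(X,\tau)$, a model $\mathcal{M}$ and a formula $\varphi$, and set $A := \sem{\varphi} \subseteq X$. Since $X$ is finite, $A$ is finite, so Proposition~\ref{keyprop} gives $\str A \setminus X = \emptyset$. Then for every $x \in X$,
\[
  \mu(x) \cap (\str A \setminus X) = \mu(x) \cap \emptyset = \emptyset,
\]
so $x \not\models \lozengens\varphi$, and hence $\sem{\lozengens\varphi} = \emptyset$.

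Alternatively, Theorem~\ref{thm:omegaacc} gives $\sem{\lozengens\varphi} = \omega(A)$. Taking $U = X \in \Nbd(x)$, we have $|U \cap A| \le |X| < \aleph_0$, so no point is an $\omega$-accumulation point of $A$, and $\omega(A) = \emptyset$.

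There is no substantial obstacle. The only point needing care is that Proposition~\ref{keyprop} is applied to the finite subset $A$ of $X$; its forward direction, via Transfer of a bijection with $\{1,\dots,n\}$, yields $\str A = A$, which is all we need.
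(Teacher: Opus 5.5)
Your proof is correct and is essentially the paper's argument. The paper transfers the finiteness of $A$ to get $\str A = A$, hence $\str A \setminus X = \emptyset$, which is exactly the finite-set direction of Proposition~\ref{keyprop} that you invoke; your alternative route via Theorem~\ref{thm:omegaacc} with $U = X$ is also valid.
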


\begin{proof}
Since each $A\subseteq X$ is finite, by Transfer we obtain $|\str A|=|A|$, so $\str A = A \subseteq X$ and $\str A \setminus X=\emptyset$.
\end{proof}
\begin{corollary}
\label{cor:nofmp}
The logic with $\diam_\mathrm{ns}$ does not have the finite model property.
\end{corollary}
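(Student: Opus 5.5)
The corollary asks us to show that the logic of $\lozengens$ (over all topological spaces, or over the class of spaces under consideration) is not determined by its finite models. The plan is to exhibit a formula that is not a theorem of this logic yet holds in every finite model. We then conclude that no finite countermodel can refute it.

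\textbf{Step 1: finite models validate $\neg\lozengens\top$.} By Proposition~\ref{thm:degenerate}, on every finite space $(X,\tau)$ and every valuation we have $\sem{\lozengens\top} = \emptyset$. Hence $\neg\lozengens\top$, equivalently $\sq_{\mathrm{ns}}\bot$, is true at every point of every finite model. The same holds for Kripke-style finite structures read through the halo semantics, since these are finite topological spaces.

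\textbf{Step 2: $\neg\lozengens\top$ is not valid.} Take $\mathbb{R}$ with the usual topology. Every neighbourhood of $0$ meets $\mathbb{R} = \sem{\top}$ in infinitely many points, so $0 \in \omega(\sem{\top})$. By Theorem~\ref{thm:omegaacc}, $0 \models \lozengens\top$. Alternatively, argue directly: a positive infinitesimal lies in $\mu(0) \cap (\str\mathbb{R}\setminus\mathbb{R})$, and this set is nonempty by Proposition~\ref{keyprop} together with the saturation argument in the proof of Theorem~\ref{thm:omegaacc}. Thus $\neg\lozengens\top$ fails at $0$. The same argument works on any infinite space: some point is an $\omega$-accumulation point of the whole space, because $X$ itself is infinite and a point lying in the closure-like intersection can be found by saturation. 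The single example $\mathbb{R}$ already suffices.

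\textbf{Step 3: conclude.} If the logic had the finite model property, every non-theorem would be refuted in some finite model. By Step 2, $\neg\lozengens\top$ is a non-theorem, but by Step 1 it is valid on all finite models, so this is a contradiction. The only delicate point is fixing what ``the logic'' refers to. I would take it as the set of formulas valid on all topological spaces under $\lozengens$ (or on all infinite spaces, where Step 2 still applies). With that choice, nothing beyond the earlier results is needed, and I expect no real obstacle.
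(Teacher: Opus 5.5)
Your argument is correct and is essentially the paper's. The paper states the corollary with no separate proof, as an immediate consequence of Proposition~\ref{thm:degenerate}: $\sq_{\mathrm{ns}}\bot$ holds on every finite space but fails at $0 \in \omega(\mathbb{R})$.

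One caution: your aside that the same refutation works on every infinite space is false, though you rightly do not rely on it. On an infinite discrete space, $\{x\}$ is an open neighbourhood meeting $X$ in one point, so $\omega(X)=\emptyset$ and $\sq_{\mathrm{ns}}\bot$ is valid there. Saturation produces nonstandard points, but not necessarily ones lying in the halo of a standard point.
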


\begin{proposition}
\label{prop:nonKripke}
The operator $\diam_\mathrm{ns}$ cannot be represented as a Kripke modality on any topological space where $\omega$ is non-trivial.
That is, if there exist $x \in X$ and $A \subseteq X$ with
$x \in \omega(A)$, then there is no binary relation
$R \subseteq X \times X$ satisfying
\[
  y \in \omega(B) \iff R(y) \cap B \neq \emptyset
  \quad\text{for all }B \subseteq X,\, y \in X.
\]
\end{proposition}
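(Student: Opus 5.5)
Suppose for contradiction that such an $R$ exists, and fix $x \in X$ and $A \subseteq X$ with $x \in \omega(A)$. The plan is to use the Kripke representation to show that $\omega(A)$ is witnessed by a single point of $A$, and then derive a contradiction because $\omega$ is insensitive to finite modifications of its argument.

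First, I will record the finiteness property. For any finite $F \subseteq X$, $\omega(F) = \emptyset$, since $|U \cap F| < \aleph_0$ for every open $U$. More generally, $\omega(B \setminus F) = \omega(B)$ whenever $F$ is finite, because removing finitely many points cannot change whether $U \cap B$ is infinite. Alternatively this follows from Proposition~\ref{keyprop} together with $\str(B\setminus F)\setminus X = \str B \setminus X$, using $\str F = F$.

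Next, I will apply the assumed representation. Since $x \in \omega(A)$, we get $R(x) \cap A \neq \emptyset$, so we may pick some $a \in R(x) \cap A$. Now take $B = \{a\}$. Then $R(x) \cap B = \{a\} \neq \emptyset$, and the representation forces $x \in \omega(\{a\})$. But $\omega(\{a\}) = \emptyset$ by the finiteness observation, which is a contradiction. Hence no such $R$ exists.

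I do not expect any step to be a real obstacle; the argument is essentially that a Kripke diamond is completely additive, so it is determined by its values on singletons. The only point requiring care is to state this explicitly: the Kripke condition makes $\omega$ distribute over arbitrary unions, $\omega(A) = \bigcup_{a\in A}\omega(\{a\})$. Since each $\omega(\{a\})$ is empty, every $\omega(A)$ would be empty, contradicting the nontriviality hypothesis. I will phrase the final proof in this complete-additivity form, which also explains why the failure is structural rather than incidental.
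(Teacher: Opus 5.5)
Your proposal is correct and is essentially the paper's proof: pick $a \in R(x) \cap A$, test the representation on $B=\{a\}$, and contradict $\omega(\{a\}) = \emptyset$. The complete-additivity reformulation $\omega(A) = \bigcup_{a \in A} \omega(\{a\})$ is the same argument stated globally. The remark on invariance under removing finite sets is not needed.
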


\begin{proof}
Suppose such $R$ exists. Take any $y_0 \in R(x) \cap A$ (which exists since $x \in \omega(A)$ and the Kripke clause implies $R(x)\cap A \neq\emptyset$). 
Since $\{y_0\}$ is a finite set, $\omega(\{y_0\}) = \emptyset$ by Proposition~\ref{keyprop}: no point can have every neighbourhood meet a singleton in infinitely many points. 
The Kripke clause then implies
$R(x) \cap \{y_0\} = \emptyset$, contradicting $y_0 \in R(x)$.
\end{proof}
Note that Proposition~\ref{prop:nonKripke} in particular holds for every infinite non-discrete $T_1$ space:
Lemma~\ref{thm:coincide} guarantees
$\omega(X\setminus\{x\}) = \mathrm{d}(X\setminus\{x\})
\ni x$ for any non-isolated $x$.

\begin{proposition}
\label{prop:T1char}
The bimodal property
  $\diam_c\varphi \;\leftrightarrow\; \varphi \vee \lozengens\varphi$
is valid in $(X,\tau)$ iff $(X,\tau)$ is $T_1$.
\end{proposition}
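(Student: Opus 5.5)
By Proposition~\ref{prop:clequiv} and Theorem~\ref{thm:omegaacc}, the bimodal formula is valid on $(X,\tau)$ iff $\cl(A) = A \cup \omega(A)$ for every $A \subseteq X$. Indeed, every truth set is some subset of $X$, and conversely every subset $A$ is $\sem{p}$ for a suitable valuation. So I will argue purely topologically about this equation.

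Note first that $A \cup \omega(A) \subseteq \cl(A)$ holds in every space. This is Proposition~\ref{prop:nsbasic}(iv), or directly: an $\omega$-accumulation point of $A$ is in particular adherent to $A$. The real content is therefore the converse inclusion $\cl(A) \subseteq A \cup \omega(A)$.

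\emph{($T_1 \Rightarrow$ validity.)} Assume $(X,\tau)$ is $T_1$ and let $x \in \cl(A)\setminus A$. Then every neighbourhood of $x$ meets $A$, and it does so in points other than $x$ because $x \notin A$, so $x \in d(A)$. By Lemma~\ref{thm:coincide}, on $T_1$ spaces $d(A) = \omega(A)$, hence $x \in \omega(A)$. This gives $\cl(A) \subseteq A \cup \omega(A)$. Alternatively, the halo version works too: since $x \notin A$, any witness $y \in \mu(x)\cap\str A$ satisfies $y \neq x$, and $\mu(x)\cap X=\{x\}$ by Proposition~\ref{prop:nschar}(v), so $y \notin X$.

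\emph{(Validity $\Rightarrow T_1$.)} I argue by contraposition. Suppose $(X,\tau)$ is not $T_1$. Then there are distinct $x,y$ such that every open neighbourhood of $x$ contains $y$, i.e.\ $x \in \cl(\{y\})$. Take $A = \{y\}$ and $\nu(p) = A$. Then $x \models \diam_c p$, while $x \not\models p$ since $x \neq y$. Moreover $\omega(\{y\}) = \emptyset$, because a singleton is finite; in halo terms, $\str\{y\}\setminus X=\emptyset$ by Proposition~\ref{keyprop}. Hence $x \not\models \lozengens p$, and the biconditional fails at $x$.

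The only step needing care is the $T_1$ direction, where one must exclude $x$ itself as a witness. This is exactly where the hypothesis $x\notin A$ is used, together with Lemma~\ref{thm:coincide}. Everything else is immediate from the earlier characterisations.
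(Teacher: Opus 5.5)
Your proof is correct and follows essentially the same route as the paper. The $T_1$ direction reduces to $\cl(A)\setminus A \subseteq d(A) = \omega(A)$ via Lemma~\ref{thm:coincide}, which the paper phrases as $\cl(A) = A \cup d(A)$. The converse uses a non-closed singleton $\{y\}$ with $x \in \cl(\{y\})$, $x \neq y$, and $\omega(\{y\}) = \emptyset$, exactly as in the paper. Your extra halo-based argument via $\mu(x)\cap X = \{x\}$ is also valid and mirrors the second half of the paper's proof of Lemma~\ref{thm:coincide}.
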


\begin{proof}
$(\Leftarrow)$
Let $(X,\tau)$ be $T_1$ and $A = \sem{\varphi}$.
Since $\cl(A) = A \cup d(A)$, by Proposition~\ref{prop:clequiv} we have 
$\sem{\diam_c\varphi} = A \cup d(A)$.
By Lemma~\ref{thm:coincide}, $\omega(A) = d(A)$ on $T_1$ spaces.
Hence $\sem{\varphi \vee \lozengens\varphi}
  = A \cup \omega(A)
  = A \cup d(A)
  = \sem{\diam_c\varphi}$.

$(\Rightarrow)$
Suppose $(X,\tau)$ is not $T_1$. Then some singleton $\{a\}$ is not closed,
so there exists $b \neq a$ with $b \in \cl(\{a\})$.
Set $\nu(p) = \{a\}$, so $A = \{a\}$.
Then $b \in \cl(\{a\}) = \sem{\diam_c p}$.
However, $A = \{a\}$ is finite, so $\str\{a\} \setminus X = \emptyset$
by Proposition~\ref{keyprop}, giving $\omega(\{a\}) = \emptyset$.
Hence $\sem{p \vee \lozengens p} = \{a\} \cup \emptyset = \{a\}$
so $b \notin \sem{p \vee \lozengens p}$.
\end{proof}
The property
$\diam_c\varphi \leftrightarrow \varphi \vee \diam_d\varphi$ in contrast is valid on all topological spaces, since $\cl(A)=A\cup d(A)$.

We now introduce the appropriate notions of perfect sets and scatteredness required for L\"ob Axiom and the decomposition of the $\omega$-accumulation operator.
\begin{definition}[$\omega$-perfect set and $\omega$-scattered space]
\label{def:omegaperf}
A closed subset $P \subseteq X$ is \emph{$\omega$-perfect} if $\omega(P) \supseteq P$,
that is, every point of $P$ is an $\omega$-accumulation point of $P$ itself.
A topological space $(X,\tau)$ is \emph{$\omega$-scattered} if it contains no non-empty $\omega$-perfect subset.
\end{definition}

On $T_1$ spaces, Lemma~\ref{thm:coincide} implies $\omega(A) = d(A)$ for every $A \subseteq X$, so $\omega$-perfect coincides with perfect and $\omega$-scattered coincides with scattered in the sense of Cantor--Bendixson~\cite{Kechris1995}.

\begin{lemma}
\label{lem:omegaclosed}
For every topological space $(X,\tau)$ and every $A \subseteq X$, the set $\omega(A)$ is closed.
\end{lemma}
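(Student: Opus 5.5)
We show that the complement $X \setminus \omega(A)$ is open, working directly from the classical definition of $\omega$-accumulation points. By Theorem~\ref{thm:omegaacc} this is the same set as $\sem{\lozengens\varphi}$ when $A = \sem{\varphi}$, but no nonstandard machinery is needed.

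Suppose $x \notin \omega(A)$. Then there is an open $U \in \Nbd(x)$ with $U \cap A$ finite. Now take any $z \in U$. The set $U$ is an open neighbourhood of $z$, and $U \cap A$ is finite, so $z \notin \omega(A)$. Hence $U \subseteq X \setminus \omega(A)$. Since $x$ was an arbitrary point of the complement, $X \setminus \omega(A)$ is open and $\omega(A)$ is closed.

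The only point to watch is that no separation axiom is used. The witnessing neighbourhood $U$ of $x$ serves as a witness for every point of $U$, because the defining condition "$U \cap A$ is finite" does not depend on the base point. This contrasts with the Cantor derivative. There the condition is $U \cap (A \setminus \{x\}) = \emptyset$, which does mention $x$, and transferring it to other points of $U$ needs $T_D$.

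Alternatively, one can argue nonstandardly. If $x \in \cl(\omega(A))$, pick by Proposition~\ref{prop:nschar}(i) a $y \in \mu(x) \cap \str\omega(A)$. Then use Transfer and Saturation as in Proposition~\ref{thm:wK4} to build $w \in \mu(x) \cap (\str A \setminus X)$, intersecting the internal sets $\str(U \cap A)\setminus G$. This route is more laborious, so the direct argument above is preferred.
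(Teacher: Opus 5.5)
Your argument is correct and is essentially the paper's proof in contrapositive form. The paper shows $\cl(\omega(A)) \subseteq \omega(A)$ by taking $U \in \Nbd(x)$, picking $y \in U \cap \omega(A)$, and noting that $U$ is also an open neighbourhood of $y$, so $|U \cap A| \geq \aleph_0$ --- this is exactly your observation that the condition on $U$ does not depend on the base point, and you are right to prefer the direct route over your nonstandard sketch.
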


\begin{proof}
Let $x \in \cl(\omega(A))$ and $U \in \Nbd(x)$.
Pick $y \in U \cap \omega(A)$.
Since $U$ is open and $y \in \omega(A)$, we have $|U \cap A| \geq \aleph_0$.
Since $U$ was arbitrary, $x \in \omega(A)$.
\end{proof}
The Cantor derivative $d(A)$ is not closed in general; closedness requires $T_D$.
In contrast, closedness of $\omega(A)$ holds without any separation assumption (Lemma~\ref{lem:omegaclosed}), yielding the following decomposition, analogous to the classical Cantor--Bendixson theorem~\cite{Kechris1995}.

\begin{theorem}[$\omega$-Cantor-Bendixson]
\label{thm:omegaCB}
Every topological space $(X,\tau)$ has a largest $\omega$-perfect subset $P_\omega$, which is closed.
The complement $X \setminus P_\omega$ is open and $\omega$-scattered.
In particular, $(X,\tau)$ is $\omega$-scattered if and only if $P_\omega = \emptyset$.
\end{theorem}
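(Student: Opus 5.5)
The plan is to build $P_\omega$ as the closure of the union of all $\omega$-perfect subsets. The key tools are monotonicity of $\omega$ (if $A\subseteq B$ then $\omega(A)\subseteq\omega(B)$, immediate from the definition) and Lemma~\ref{lem:omegaclosed}.

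\textbf{Step 1 (existence of $P_\omega$).} Let $\{P_i\}_{i\in I}$ be the family of all $\omega$-perfect subsets of $X$, and put $U=\bigcup_i P_i$. By monotonicity,
\[
U=\bigcup_i P_i\subseteq\bigcup_i\omega(P_i)\subseteq\omega(U).
\]
The union $U$ need not be closed, so set $P_\omega=\cl(U)$. Then $\omega(P_\omega)\supseteq\omega(U)\supseteq U$. Since $\omega(P_\omega)$ is closed by Lemma~\ref{lem:omegaclosed}, it contains $\cl(U)=P_\omega$. Thus $P_\omega$ is closed and $\omega$-perfect. It therefore belongs to the family, so $P_\omega\subseteq U$, and hence $P_\omega=U$. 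It is the largest $\omega$-perfect set, since every $P_i$ is contained in it.

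The same lemma also yields the general principle behind Step~1: if $Q\subseteq\omega(Q)$, then $\cl(Q)$ is $\omega$-perfect. I will reuse this in Step~2. As a remark, one could alternatively obtain $P_\omega$ as the fixed point of the transfinite iteration $X^{(0)}=X$, $X^{(\alpha+1)}=\omega(X^{(\alpha)})$, with intersections at limit stages. This is a decreasing sequence of closed sets, so it stabilises, mirroring the classical proof. The union argument is shorter, so I would use it.

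\textbf{Step 2 (the complement).} Openness of $Y:=X\setminus P_\omega$ is immediate from closedness of $P_\omega$. For $\omega$-scatteredness, I read the statement as: $Y$, with the subspace topology, contains no nonempty set that is $\omega$-perfect in $Y$. Because $Y$ is open in $X$, the open neighbourhoods of a point $y\in Y$ in the subspace are exactly the open sets of $X$ containing $y$ and contained in $Y$. These form a neighbourhood base at $y$ in $X$. Hence for $Q\subseteq Y$ and $y\in Y$, we have $y\in\omega_Y(Q)$ iff $y\in\omega_X(Q)$.

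Now suppose $Q\subseteq Y$ is nonempty and $\omega$-perfect in $Y$. Then $Q\subseteq\omega_X(Q)$, so by the principle from Step~1, $\cl_X(Q)$ is $\omega$-perfect in $X$. Consequently $Q\subseteq\cl_X(Q)\subseteq P_\omega$, which contradicts $Q\neq\emptyset$ and $Q\cap P_\omega=\emptyset$.

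\textbf{Step 3 (the final equivalence).} This is immediate: $X$ has a nonempty $\omega$-perfect subset iff the largest one, $P_\omega$, is nonempty.

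\textbf{Main obstacle.} The main point needing care is the subspace step: fixing what ``$\omega$-scattered'' means for $X\setminus P_\omega$, and checking that $\omega$ relativises correctly. This uses the openness of the complement. The other delicate point is passing from a union of $\omega$-perfect sets to its closure, which is handled entirely by Lemma~\ref{lem:omegaclosed}. Notably, this is exactly where no separation axiom is needed, in contrast to the Cantor derivative.
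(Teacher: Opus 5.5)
Your proof is correct, but it reaches $P_\omega$ by a different route from the paper. The paper builds $P_\omega$ from above by transfinite iteration: $\omega^0=X$, $\omega^{\alpha+1}=\omega(\omega^\alpha)$, and $\omega^\lambda=\bigcap_{\alpha<\lambda}\omega^\alpha$. It argues that this sequence is decreasing and stabilises at some $\gamma\le|X|$. Maximality then needs a separate transfinite induction showing that every $\omega$-perfect $Q$ lies in each $\omega^\alpha$. You instead build $P_\omega$ from below as $\cl\bigl(\bigcup_i P_i\bigr)$. Monotonicity and Lemma~\ref{lem:omegaclosed} show that this closure is itself $\omega$-perfect, so it equals the union and maximality comes for free. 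Both arguments use the same two ingredients: monotonicity of $\omega$ and closedness of $\omega(A)$. The complement step is essentially the same in both: you pass from $Q\subseteq\omega(Q)$ to the $\omega$-perfect set $\cl(Q)$, while the paper passes to $\omega(P)$. Your version is shorter and avoids ordinals and the stabilisation and cardinality bookkeeping. The paper's version gives extra structure, namely the $\omega$-Cantor--Bendixson hierarchy. This yields a rank on $X\setminus P_\omega$, since each such point lies in $\omega^\alpha\setminus\omega^{\alpha+1}$ for a unique $\alpha$, and that rank is what makes the result a genuine analogue of the classical theorem. One small remark: for $Q\subseteq Y$, the relativisation $\omega_Y(Q)=\omega_X(Q)\cap Y$ holds for every subspace $Y$, because $W\cap Y\cap Q=W\cap Q$. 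So the openness of $X\setminus P_\omega$ is not actually needed in your Step~2, nor in the paper's corresponding step.
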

\begin{proof}
First, note that $\omega$ is monotone: if $A \subseteq B$ then $U \cap A \subseteq U \cap B$ for every open $U$, so $\omega(A) \subseteq \omega(B)$.

Define $\omega^0 = X$, $\omega^{\alpha+1} = \omega(\omega^\alpha)$, and $\omega^\lambda = \bigcap_{\alpha < \lambda} \omega^\alpha$ for limit $\lambda$.
Since $\omega(A)$ is closed (Lemma~\ref{lem:omegaclosed}) and $\omega(A) \subseteq \cl(A)$, if $A$ is closed then $\omega(A) \subseteq A$.
In particular $\omega^1 = \omega(X) \subseteq X = \omega^0$; if $\omega^\alpha \subseteq \omega^\beta$ for $\beta < \alpha$, then by monotonicity $\omega^{\alpha+1} = \omega(\omega^\alpha) \subseteq \omega(\omega^\beta) = \omega^{\beta+1}$, and at limits the inclusion is preserved by definition.
Hence the sequence is decreasing.
Since the $\omega^\alpha$ are subsets of $X$ and each successor step either strictly shrinks the set or stabilises, the sequence must stabilise at some ordinal $\gamma \leq |X|$; set $P_\omega = \omega^\gamma$.
By construction $\omega(P_\omega) = \omega^{\gamma+1} = \omega^\gamma = P_\omega$, so $P_\omega$ is $\omega$-perfect.
Since each $\omega^\alpha$ is closed by Lemma~\ref{lem:omegaclosed}, and intersections of closed sets are closed, $P_\omega$ is closed, and $X \setminus P_\omega$ is open.

It remains to show that $P_\omega$ is the largest $\omega$-perfect subset.
Let $Q \subseteq X$ be $\omega$-perfect, so $Q \subseteq \omega(Q)$.
We show $Q \subseteq \omega^\alpha$ for every $\alpha$ by transfinite induction.
The base case $Q \subseteq \omega^0 = X$ is immediate.
If $Q \subseteq \omega^\alpha$, then by monotonicity $\omega(Q) \subseteq \omega(\omega^\alpha) = \omega^{\alpha+1}$, and since $Q \subseteq \omega(Q)$ we obtain $Q \subseteq \omega^{\alpha+1}$.
At limit $\lambda$, if $Q \subseteq \omega^\alpha$ for all $\alpha < \lambda$, then $Q \subseteq \bigcap_{\alpha < \lambda} \omega^\alpha = \omega^\lambda$.
Hence $Q \subseteq P_\omega$.

Finally, if $P$ is a non-empty $\omega$-perfect subset of the subspace $X \setminus P_\omega$, then $P \subseteq \omega(P)$ since $X \setminus P_\omega$ is open.
The set $\omega(P)$ is then a non-empty closed $\omega$-perfect subset of $X$, so $\omega(P) \subseteq P_\omega$ by maximality, contradicting $P \subseteq X \setminus P_\omega$.
Hence $X \setminus P_\omega$ is $\omega$-scattered.
\end{proof}

\subsection{Topological completeness for K4 and GL}
In this section we establish completeness with respect to the nonstandard semantics.
Corollary~\ref{cor:T1axioms} demonstrates the validity of Axiom~4 for $\lozengens$ on $T_1$ spaces via the collapse to $\diam_d$.
We now show that Axiom~4 holds on all topological spaces, with no separation condition required.
\begin{lemma}
\label{thm:nsK4}
For every topological model $\mathcal{M}$ and every
formula $\varphi$:
\[
  \sem{\lozengens\lozengens\varphi}
  \subseteq
  \sem{\lozengens\varphi}.
\]
\end{lemma}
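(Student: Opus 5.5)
By Theorem~\ref{thm:omegaacc}, the statement reduces to the purely topological inclusion $\omega(\omega(A)) \subseteq \omega(A)$ for every $A \subseteq X$, where $A = \sem{\varphi}$. Indeed, $\sem{\lozengens\varphi} = \omega(A)$, and hence $\sem{\lozengens\lozengens\varphi} = \omega(\omega(A))$. I will prove this inclusion directly, with no separation assumption.

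The argument is the one already underlying Lemma~\ref{lem:omegaclosed}. Let $x \in \omega(\omega(A))$ and fix $U \in \Nbd(x)$.
\begin{enumerate}[label=\textup{(\arabic*)}]
  \item Since $x \in \omega(\omega(A))$, the set $U \cap \omega(A)$ is infinite, so in particular it is non-empty. Pick some $y$ in it.
  \item Since $U$ is an open set containing $y$ and $y \in \omega(A)$, the definition of $\omega$-accumulation gives $|U \cap A| \geq \aleph_0$.
  \item As $U \in \Nbd(x)$ was arbitrary, $x \in \omega(A)$.
\end{enumerate}
Only non-emptiness of $U \cap \omega(A)$ is used. So the argument in fact shows $\cl(\omega(A)) \subseteq \omega(A)$, and the claim follows from Lemma~\ref{lem:omegaclosed} together with the fact that $\omega(B) \subseteq \cl(B)$ for every $B$. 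This last fact holds because any $\omega$-accumulation point of $B$ has every neighbourhood meeting $B$.

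To fit the halo theme, I could add a nonstandard version of the same argument, in the style of the proof of Proposition~\ref{thm:wK4}. From $x \models \lozengens\lozengens\varphi$, take $y \in \mu(x) \cap (\str\omega(A) \setminus X)$. Transfer of the defining formula of $\omega(A)$, applied to the internal open sets $\str U$ containing $y$, makes each internal set $\str(U \cap A)$ hyperinfinite. By Transfer back, each $U \cap A$ is infinite. Saturation on the family from the proof of Theorem~\ref{thm:omegaacc} then produces the required nonstandard witness.

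I expect no real obstacle in the standard-topological route. The only delicate point in the nonstandard route is justifying the passage from ``$\str(U\cap A)$ is hyperinfinite'' to ``$U \cap A$ is infinite''. This is done by transferring ``$U \cap A$ is finite'' as the existence of a bijection with some $\{1,\ldots,n\}$, as in Proposition~\ref{keyprop}. For this reason I would present the standard argument as the proof and keep the halo version as a remark.
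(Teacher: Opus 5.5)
Your proof is correct and is essentially the paper's argument. The paper also fixes an arbitrary $U \in \Nbd(x)$, picks $y \in U \cap \omega(A)$, and uses that $U$ is an open neighbourhood of $y$ to conclude $|U \cap A| \geq \aleph_0$; your observation that this is just $\omega(\omega(A)) \subseteq \cl(\omega(A)) = \omega(A)$ via Lemma~\ref{lem:omegaclosed} is accurate, and the halo remark is a sound but optional addition.
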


\begin{proof}
Let $A = \sem{\varphi}$ and suppose $x \in \omega
(\omega(A))= \sem{\lozengens\lozengens\varphi}$ by Theorem~\ref{thm:omegaacc}. 
Let $U \in \mathcal{N}(x)$ be arbitrary.
By definition, $|U\cap \omega(A)|\geq\aleph_0$; pick a point $y \in U \cap \omega(A)$. 
Since $U$ is an open neighbourhood of $y$ and $y \in \omega(A)$,
$|U \cap A|\geq \aleph_0$. 
Since $U$ is arbitrary, $x \in
\omega(A) = \sem{\lozengens\varphi}$ by Theorem~\ref{thm:omegaacc}.
\end{proof}
\begin{theorem}[topological completeness for K4]
\label{thm:nscompleteness}
$\mathsf{K4}$ is sound for $\lozengens$ over all topological spaces. 
It is complete over all infinite topological spaces.
\end{theorem}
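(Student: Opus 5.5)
Soundness is assembled from earlier results, and for completeness I would build, for each finite transitive frame, an infinite space on which $\lozengens$ faithfully simulates the Kripke diamond.

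\textbf{Soundness.} Axiom~K is valid by Lemma~\ref{lem:nsK} and Axiom~4 by Lemma~\ref{thm:nsK4}. Modus Ponens clearly preserves validity. For Necessitation, if $\sem{\varphi}=X$ then $\sem{\neg\varphi}=\emptyset$. Hence $\sem{\lozengens\neg\varphi}=\omega(\emptyset)=\emptyset$ by Theorem~\ref{thm:omegaacc}, so $\sem{\sq_{\mathrm{ns}}\varphi}=X$. No separation axiom is used anywhere, so soundness holds over all spaces.

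\textbf{Completeness: reduction.} I use the known fact that $\mathsf{K4}$ is complete with respect to finite transitive Kripke frames (finite model property via filtration). So if $\mathsf{K4}\nvdash\varphi$, there are a finite transitive frame $(W,R)$, a valuation $V$ and a world $w_0$ with $w_0\not\models\varphi$. It then suffices to build an infinite space $X$ and a surjection $f:X\to W$ such that, for every $B\subseteq W$,
\[
  \omega\bigl(f^{-1}(B)\bigr) = f^{-1}\bigl(R^{-1}(B)\bigr), \qquad\text{where } R^{-1}(B)=\{w : R(w)\cap B\neq\emptyset\}.
\]
With the valuation $\nu(p)=f^{-1}(V(p))$, a routine induction on formulas, using Theorem~\ref{thm:omegaacc} for the modal step, gives $\sem{\psi}=f^{-1}(\sem{\psi}_W)$. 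Any point of $f^{-1}(w_0)$ then refutes $\varphi$.

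\textbf{Completeness: the construction.} Let $X=W\times\mathbb{N}$, which is infinite, and let $f$ be the first projection. For $(w,n)\in X$ and finite $F\subseteq X$, define the basic sets
\[
  N_{w,n,F}=\{(w,n)\}\cup\bigl((R(w)\times\mathbb{N})\setminus F\bigr).
\]
Take the topology generated by these sets. The step I expect to need the most care is checking that they form a base. Suppose $(u,k)\in N_{w,n,F}\cap N_{v,m,G}$. Either $(u,k)$ is the centre of a basic set, or it lies in $R(w)\times\mathbb{N}$, in which case $R(u)\subseteq R(w)$ by transitivity. In every case, $N_{u,k,(F\cup G)\setminus\{(u,k)\}}$ is contained in the intersection. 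Reflexive worlds need no special treatment: if $wRw$, the copies of $w$ simply lie inside $R(w)\times\mathbb{N}$.

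\textbf{Verifying the key identity.} Every neighbourhood of $(w,n)$ contains some $N_{w,n,F}$. This set contains all but finitely many points of $R(w)\times\mathbb{N}$, together with at most one further point, namely $(w,n)$ itself. Suppose $R(w)\cap B\neq\emptyset$, say $v\in R(w)\cap B$. Then every neighbourhood contains cofinitely many points of $\{v\}\times\mathbb{N}\subseteq f^{-1}(B)$, so it meets $f^{-1}(B)$ infinitely. Conversely, suppose $R(w)\cap B=\emptyset$. Then $N_{w,n,\emptyset}\cap f^{-1}(B)\subseteq\{(w,n)\}$ is finite. This proves $\omega(f^{-1}(B))=f^{-1}(R^{-1}(B))$.

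Thus every non-theorem of $\mathsf{K4}$ is refuted on an infinite topological space, which gives completeness over the class of infinite spaces.
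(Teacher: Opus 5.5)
Your proof is correct. The soundness part matches the paper's, and you also spell out Necessitation via $\omega(\emptyset)=\emptyset$, which the paper leaves implicit. Your completeness argument, however, takes a genuinely different route.

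The paper builds no spaces itself. It cites Bezhanishvili, Esakia and Gabelaia for the fact that every finite transitive frame is a d-morphic image of a metrizable (hence infinite, $T_1$) space. It then uses Lemma~\ref{thm:coincide} ($\lozengens=\diam_d$ on $T_1$ spaces) to carry the countermodel over from the derivative semantics.

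You instead construct $W\times\mathbb{N}$ explicitly and verify the identity $\omega(f^{-1}(B))=f^{-1}(R^{-1}(B))$ directly from Theorem~\ref{thm:omegaacc}. Your base check, with transitivity giving $R(u)\subseteq R(w)$, and both directions of the identity go through as written. This makes your argument self-contained and elementary, and independent of the $T_1$ collapse. It also never uses finiteness of $W$, so it would work equally well with the canonical frame.

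What the paper's citation buys in return is a narrower class of spaces: completeness already over metrizable spaces. Your space is itself $T_1$: for $(w,n)\neq(v,m)$, the basic set $N_{w,n,\{(v,m)\}}$ is a neighbourhood of $(w,n)$ omitting $(v,m)$. So on it $\omega=d$, and your projection is also a d-morphism. But it is not Hausdorff as soon as some world has a successor, since two copies $(w,n),(w,m)$ of such a $w$ cannot be separated. You therefore obtain a $T_1$ analogue of the representation theorem rather than the metrizable one.
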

\begin{proof}
Soundness follows from the results above.
For completeness, suppose $\varphi \notin \mathsf{K4}$.
By the Kripke completeness of $\mathsf{K4}$~\cite{BlackburnDeRijkeVenema2001}, $\varphi$ is 
falsified on some finite transitive Kripke frame.
By Bezhanishvili, Esakia, and Gabelaia~\cite{BEG2005,BEG2010}, every such frame 
is the image of a d-map from a metrizable, hence infinite and $T_1$, 
space $(X,\tau)$, and $\varphi$ is falsified under $\diam_d$ on $(X,\tau)$.
Since $(X,\tau)$ is $T_1$, Lemma~\ref{thm:coincide} implies $\lozengens = \diam_d$, so $\varphi$ is falsified under the nonstandard semantics on an infinite topological space.
\end{proof}
We proceed to the next completeness result. 
\begin{lemma}
\label{thm:GLchar}
The \textup{L}  axiom
\[
  \sq(\sq\varphi \to \varphi)
  \to
  \sq\varphi
\]
is valid for the nonstandard semantics over all $\omega$-scattered topological spaces.
\end{lemma}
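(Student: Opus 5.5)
We argue in the dual, diamond form. Validity of $\sq(\sq\varphi\to\varphi)\to\sq\varphi$ is equivalent to validity of $\lozengens\varphi \to \lozengens(\varphi\wedge\neg\lozengens\varphi)$. Set $A=\sem{\varphi}$. By Theorem~\ref{thm:omegaacc} it suffices to show
\[
\omega(A)\subseteq\omega\bigl(A\setminus\omega(A)\bigr)
\]
on every $\omega$-scattered space. We proceed by contradiction and extract a non-empty $\omega$-perfect set.

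Suppose $x\in\omega(A)$ but $x\notin\omega(A\setminus\omega(A))$. Then there is $U\in\Nbd(x)$ such that $F:=U\cap(A\setminus\omega(A))$ is finite. Let $C:=U\cap A\cap\omega(A)$.

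\textbf{Key claim.} Every $y\in U\cap\omega(A)$ lies in $\omega(C)$. To see this, take any open $V\ni y$. Then $V\cap U$ is an open neighbourhood of $y$, so $V\cap U\cap A$ is infinite. Removing the finite set $F$ leaves an infinite set, and this set is contained in $V\cap C$. Hence $V\cap C$ is infinite, and $y\in\omega(C)$.

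Two consequences follow from the claim.
\begin{itemize}
\item Since $x\in U\cap\omega(A)$, we get $x\in\omega(C)$. In particular $C$ is infinite, hence non-empty (alternatively, use Proposition~\ref{keyprop}).
\item Since $C\subseteq U\cap\omega(A)$, we get $C\subseteq\omega(C)$.
\end{itemize}

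Now let $P:=\cl(C)$, which is a non-empty closed set. By Lemma~\ref{lem:omegaclosed}, $\omega(C)$ is closed and contains $C$, so $P\subseteq\omega(C)$. By monotonicity of $\omega$ (noted in the proof of Theorem~\ref{thm:omegaCB}), $\omega(C)\subseteq\omega(P)$. Hence $P\subseteq\omega(P)$, so $P$ is a non-empty $\omega$-perfect subset of $X$. This contradicts $\omega$-scatteredness, so the inclusion holds and Axiom~L is valid.

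The main obstacle is the key claim. A single finite exception set must be chosen uniformly, and this is exactly what the neighbourhood $U$ supplies. It matters that we intersect each $V$ with $U$, since $F$ is finite only inside $U$. The closure step is also needed, because the definition of $\omega$-perfect requires a closed set. Closedness of $\omega(C)$ with no separation axiom, from Lemma~\ref{lem:omegaclosed}, is what makes this step go through.
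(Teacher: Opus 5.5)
Your proof is correct and is essentially the paper's argument: both take the neighbourhood in which $A\setminus\omega(A)$ is finite, use the ``infinite minus finite is infinite'' step, and invoke Lemma~\ref{lem:omegaclosed} to obtain a closed set that is its own $\omega$-accumulation set, contradicting $\omega$-scatteredness. The only difference is the choice of that set. The paper proves $D\subseteq\omega(D)$ for the global failure set $D=\omega(A)\setminus\omega(A\setminus\omega(A))$, showing that nearby points of $A\cap\omega(A)$ are themselves failure points, and then uses $\omega(D)$; you instead stay at a single failure point, show $C\subseteq\omega(C)$ directly for $C=U\cap A\cap\omega(A)$, and use $\cl(C)$.
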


\begin{proof}
Fix a model $\mathcal{M}$ and set $A = \sem{\neg\varphi}^{\mathcal{M}}$.
By the $\omega$-accumulation characterisation,  
$\sem{\sq_\mathrm{ns}\varphi}^{\mathcal{M}} = X \setminus \omega(A)$.
From this we obtain $ \sem{\sq_\mathrm{ns}(\sq_\mathrm{ns}\varphi \to \varphi)}
  = X \setminus \omega(A \setminus \omega(A))$.
Hence L is false at $x \in X$ if and only if $x$ belongs to the set $D := \omega(A) \setminus \omega(A \setminus \omega(A))$.
In particular, L is valid in $\mathcal{M}$ if and only if $D = \emptyset$.

\medskip
$(\Leftarrow)$
Suppose $(X,\tau)$ is $\omega$-scattered. We show $D = \emptyset$ by proving that $D$ is $\omega$-perfect; $\omega$-scatteredness then forces $D = \emptyset$.
Let $x \in D$. 
By definition, since $x \in \omega(A)$, every open $U\ni x$ satisfies
    $|U \cap A| \geq \aleph_0$; since $x \notin \omega(A \setminus \omega(A))$, there is an open
    neighbourhood $U_x$ of $x$ with
    $\bigl|U_x \cap (A \setminus \omega(A))\bigr| < \aleph_0$.
Let $V$ be an arbitrary open neighbourhood of $x$; replacing it with $V \cap U_x$ if necessary, we may assume $V \subseteq U_x$.
Since $x \in \omega(A)$, the set $V \cap A$ is infinite.
Since $V \subseteq U_x$, the set $V \cap (A \setminus \omega(A))$ is finite.
Therefore
\[
  V \cap A \cap \omega(A)
  = (V \cap A) \setminus (A \setminus \omega(A))
\]
is infinite.
We claim that every $y \in V \cap A \cap \omega(A)$ belongs to $D$, i.e.\
satisfies $y \notin \omega(A \setminus \omega(A))$.
Suppose for contradiction that $y \in \omega(A \setminus \omega(A))$.
Since $y \in V$ and $V$ is open, $\bigl|V \cap (A \setminus \omega(A))\bigr| \geq \aleph_0$.
But $V \subseteq U_x$ implies
$V \cap (A \setminus \omega(A)) \subseteq U_x \cap (A \setminus \omega(A))$,
which is finite, a contradiction.

Therefore $V \cap D \supseteq V \cap A \cap \omega(A)$, which is infinite.
Since $V$ was an arbitrary open neighbourhood of $x$, we have $x \in \omega(D)$.
As $x \in D$ was arbitrary, $D \subseteq \omega(D)$, so $\omega(D)$ is a closed $\omega$-perfect subset of $X$.
Since $(X,\tau)$ is $\omega$-scattered, $\omega(D) = \emptyset$, hence $D = \emptyset$, so L is valid in $\mathcal{M}$.

($\Rightarrow$) 
Suppose $P \subseteq X$ is a non-empty $\omega$-perfect set, so
$\omega(P) \supseteq P$. Define a model $\mathcal{M}$ such that $\nu(p) = X \setminus P$,
and set $A := \sem{\neg p} = P$.
Since $\omega(P) \supseteq P$,
\[
  \sem{\sq_\mathrm{ns} p} = X \setminus \omega(P) \subseteq X \setminus P = \sem{p},
\]
so $\sq_\mathrm{ns} p \to p$ is valid on $\mathcal{M}$.
Hence $\sem{\neg(\sq_\mathrm{ns} p \to p)} = \emptyset$ and so 
$\omega(\emptyset) = \emptyset$ implies
$\sem{\sq_\mathrm{ns}(\sq_\mathrm{ns} p \to p)} = X \setminus \omega(\emptyset) = X$.
On the other hand, for each $x \in P \subseteq \omega(P)=\omega(A)$, we have $x \notin X \setminus \omega(A) = \sem{\sq_\mathrm{ns} p}$.
Therefore, L is not valid on $(X,\tau)$.
\end{proof}

\begin{theorem}[topological completeness for GL]
\label{cor:GLcomplete}
$\mathsf{GL}$ is sound for $\lozengens$ over all $\omega$-scattered topological spaces. 
It is complete over all infinite $\omega$-scattered topological spaces. 
\end{theorem}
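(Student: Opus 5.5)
Soundness: $\mathsf{GL}$ is $\mathsf{K}$ plus L, and it proves Axiom~4. So it suffices that the rules and K hold for $\lozengens$ on every space. Lemma~\ref{lem:nsK} and Proposition~\ref{prop:nsbasic} give K, Modus Ponens and Necessitation. Lemma~\ref{thm:GLchar} gives L on $\omega$-scattered spaces, so every theorem of $\mathsf{GL}$ is valid there. Lemma~\ref{thm:nsK4} also confirms Axiom~4 directly, though we do not need it.

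Completeness: I follow the template of Theorem~\ref{thm:nscompleteness}. Suppose $\varphi\notin\mathsf{GL}$. By the Kripke completeness of $\mathsf{GL}$ for finite irreflexive transitive trees (equivalently, finite strict partial orders), $\varphi$ is refuted on some such frame $\mathfrak F=(W,R)$. I then realise $\mathfrak F$ as a d-morphic image of a scattered $T_1$ space. The known route is Esakia / Abashidze--Blass: $\mathsf{GL}$ is complete for the derivative semantics on the ordinals $\omega^n$ (or any ordinal $\ge\omega^\omega$). Concretely, a finite tree of depth $n$ is a d-morphic image of the ordinal $\omega^n+1$, or of a suitable countable scattered metrizable space built by gluing copies of $\omega^k+1$ under each node. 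This space is $T_1$ (even Hausdorff) and scattered. Because d-maps preserve the derivative semantics, $\varphi$ is refuted under $\diam_d$ on this space $X$.

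Transfer to $\lozengens$: since $X$ is $T_1$, Lemma~\ref{thm:coincide} gives $\lozengens=\diam_d$ on $X$, so $\varphi$ is refuted under the nonstandard semantics. It remains to check that $X$ is infinite and $\omega$-scattered. By the remark after Definition~\ref{def:omegaperf}, on $T_1$ spaces $\omega$-scattered coincides with scattered, so the ordinal space qualifies. For infiniteness, I would note that if $\mathfrak F$ has an $R$-edge then the space must have a limit point, hence is infinite when $T_1$. To cover the degenerate case (no edges, where a finite discrete space would suffice), I add a disjoint copy of $\omega+1$ mapped d-morphically onto a single point, or simply take the space to be the ordinal $\omega^{n}+1$ with $n\ge1$.

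Main obstacle: the only nontrivial step is citing (or briefly sketching) the construction of a d-map from a scattered $T_1$ space onto an arbitrary finite irreflexive transitive tree. I would rely on the literature, Abashidze--Blass and Bezhanishvili--Esakia--Gabelaia~\cite{BEG2005}, and verify only that the space is $T_1$, scattered and infinite, which makes everything else immediate from the lemmas above.
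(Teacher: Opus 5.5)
Your soundness argument and the main line of your completeness argument match the paper's proof. Soundness comes from Lemma~\ref{lem:nsK}, Proposition~\ref{prop:nsbasic} and Lemma~\ref{thm:GLchar}. For completeness you refute $\varphi$ on a finite irreflexive transitive frame and pull it back along a d-map from a scattered metrizable (hence $T_1$) space. You then replace $\diam_d$ by $\lozengens$ via Lemma~\ref{thm:coincide} and use that scattered and $\omega$-scattered agree on $T_1$ spaces. You are right to check infiniteness separately; the paper covers it only with the phrase ``metrizable, hence infinite'', which does not handle edgeless frames. Your observation that an $R$-edge forces a limit point, hence infinitely many points in a $T_1$ space, is correct.

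Both of your patches for the edgeless case fail as stated. A d-map $f$ satisfies $f^{-1}(R^{-1}(B)) = d(f^{-1}(B))$. Sending $\omega+1$ onto a single irreflexive point $w$ therefore fails, because $\omega \in d(f^{-1}(w))$ while $R^{-1}(w)=\emptyset$. More generally, any d-map onto an edgeless frame $(W,\emptyset)$ forces $d(X) = f^{-1}(R^{-1}(W)) = \emptyset$, so the domain must be discrete. This rules out $\omega^n+1$ with $n\geq 1$ as well.

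Either of the following repairs works:
\begin{itemize}
\item Map an infinite discrete space, which is $T_1$, scattered and infinite, onto the edgeless frame by any surjection.
\item First add a new point $r$ with $rRw$ for all $w\in W$. Then $\mathfrak F$ is a generated subframe of a finite irreflexive transitive frame that has an edge and still refutes $\varphi$.
\end{itemize}
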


\begin{proof}
Soundness follows from the results above.
For completeness, suppose $\varphi \notin \mathsf{GL}$.
By the Kripke completeness of $\mathsf{GL}$~\cite{segerberg}, $\varphi$ 
is falsified on some finite irreflexive transitive Kripke frame.
By Bezhanishvili, Esakia, and Gabelaia~\cite{BEG2005,BEG2010}, every such frame 
is the image of a d-map from a scattered metrizable, hence infinite and 
$T_1$, space $(X,\tau)$, and $\varphi$ is falsified under $\diam_d$ 
on $(X,\tau)$.

Since $(X,\tau)$ is $T_1$, Lemma~\ref{thm:coincide} gives 
$\lozengens = \diam_d$, so $\varphi$ is falsified under the 
nonstandard semantics.
It remains to show $(X,\tau)$ is $\omega$-scattered.
If $P$ is a non-empty $\omega$-perfect subset, then $P\subseteq \omega(P)=d(P)$ by Lemma~\ref{thm:coincide}, so $P$ is a non-empty perfect set, contradicting scatteredness.
Therefore $(X,\tau)$ is $\omega$-scattered, and $\varphi$ is falsified under $\lozengens$ on an infinite $\omega$-scattered space.
\end{proof}
The argument above shows in particular that every scattered space is $\omega$-scattered, but not vice versa.

\section{Final remarks}

We have introduced the halo operator, giving a uniform nonstandard-analytic framework from which the classical closure and derived-set semantics arise as instances. 
It yields a new operator which turns out to be the modal counterpart of the $\omega$-accumulation point operator, with $\mathsf{K4}$ and $\mathsf{GL}$ as its complete logics over infinite spaces and infinite $\omega$-scattered spaces, respectively.

The four exclusion functions studied here are the canonical extremes, but intermediate choices are a natural next step.
 For instance, $E(x) = \{x\} \cup (\mu(x) \cap X)$ excludes the point itself together with all standard points in its halo.
 More generally, one could require witnesses to avoid any prescribed internal subset of $\mu(x)$, producing a rich family of operators interpolating between $\diam_d$ and $\lozengens$.

Another interesting direction for future work is the logic of $\lozengens$ over a single fixed infinite space such as $\mathbb{R}$.
    The natural candidate is $\mathsf{K4}$ extended by the seriality axiom $\lozengens\top$, since $\mathbb{R}$ is dense and every point is an $\omega$-accumulation point of any infinite set, but a precise
    axiomatisation and proof remain to be established.

\section*{Acknowledgements}

Yo\`av Montacute is supported by ACT-X, Grant No.\ JPMJAX24CR, JST, Japan; and by ASPIRE, Grant No.\ JPMJAP2301, JST, Japan.


\bibliographystyle{eptcs}
\bibliography{generic}
\end{document}